\theoremstyle{thmstyleone}%
\theoremstyle{thmstyletwo}%
\theoremstyle{thmstylethree}%
\newtheorem{definition}{Definition}%
\newtheorem{theorem}{Theorem}[section]
\newtheorem{lemma}[theorem]{Lemma}
\begin{document}

\title[Synchronization in the quaternionic Kuramoto model]{Synchronization in the quaternionic Kuramoto model}

\author*[1]{\fnm{Ting-Yang} \sur{Hsiao}}\email{tyhsiao2@illinois.edu}

\author[2]{\fnm{Yun-Feng} \sur{Lo}}\email{ylo49@gatech.edu}

\author[3]{\fnm{Winnie} \sur{Wang}}\email{wwang629@wisc.edu}

\affil*[1]{\orgdiv{Department of Mathematics}, \orgname{University of Illinois Urbana-Champaign}, \orgaddress{\street{}\city{Champaign}, \postcode{61820}, \state{IL}, \country{USA}}}

\affil[2]{\orgdiv{School of Electrical and Computer Engineering}, \orgname{Georgia Institute of Technology}, \orgaddress{\street{}\city{Atlanta}, \postcode{30332}, \state{GA}, \country{USA}}}

\affil[3]{\orgdiv{Department of Physics}, \orgname{University of Wisconsin-Madison},\orgaddress{\street{} \city{Madison}, \postcode{53706}, \state{WI}, \country{USA}}}

%%==================================%%
%% sample for unstructured abstract %%
%%==================================%%

\abstract{In this paper, we propose an $N$ oscillators Kuramoto model with quaternions $\mathbb{H}$. In case the coupling strength is strong, a sufficient condition of synchronization is established for general $N\geqslant 2$. On the other hand, we analyze the case when the coupling strength is weak. For $N=2$, when coupling strength is weak (below the critical coupling strength $\lambda_c$), we show that new periodic orbits emerge near each equilibrium point, and hence phase-locking state exists. This phenomenon is different from the real Kuramoto system since it is impossible to arrive at any synchronization when $\lambda<\lambda_c$. We prove a theorem that states a set of closed and dense contour forms near each equilibrium point, resembling a tree's growth rings. In other words, the trajectory of phase difference lies on a $4D$-torus surface. Therefore, this implies that the phase-locking state is Lyapunov stable but not asymptotically stable.  The proof uses a new infinite buffer method (``$\delta/n$ criterion") and a Lyapunov function argument. This has been studied both analytically and numerically. For $N=3$, we consider the  ``Lion Dance flow", the analog of Cherry flow for our model, to demonstrate that the quaternionic synchronization exists even when the coupling strength is ``super weak" (when $\lambda/\omega <0.85218915...$). Also, numerical evaluation reveals that when $N>3$, the stable manifold of Lion Dance flow exists, and the number of these equilibria is $\lfloor \frac{N-1}{2}\rfloor$. Therefore, we conjecture that Lyapunov stable quaternionic synchronization always exists.}

% real part of this stable equilibrium point tends to $2k\pi$ for some $k\in\mathbb{N}$ as $N$ tends to infinity

%The proof of synchronization for $N$ oscillators uses the Lyapunov like energy function to control the behaviours of the frequencies. When $N=2$, an isoenergetic plane of the system is found to analyze the dynamical behaviours of each oscillator. }

\keywords{Coupled oscillators, Synchronization, Kuramoto model, Quaternion algebra, Nonlinear Dynamics, Lyapunov stable}

%%\pacs[JEL Classification]{D8, H51}

%%\pacs[MSC Classification]{35A01, 65L10, 65L12, 65L20, 65L70}

\maketitle

\section{Introduction}\label{sec1}
Synchronization is the natural phenomenon of collective oscillation observed in all autonomous oscillators in a system, regardless of the initial phases and frequencies of each unit. The study of synchronization has a long history, starting with Horologium Oscillatorium written by Huygens in 1673 \cite{horologium}. This problem has been widely studied since then. Please see general description articles \cite{reviewstrogatz2001, bookstrogatz2004}, textbooks \cite{bookpikovsky2001, bookosipov2007}, and review articles \cite{reviewstrogatz2000, reviewboccaletti2002}.  

The first attempt at understanding this problem came from Winfree \cite{reviewwinfree1966}, who studied the nonlinear dynamics of a large population of weakly coupled limit cycle oscillators. He used a mean-field approximation approach and it demonstrated that if each oscillator was coupled to the system's collective rhythm, then a ``phase transition" of frequencies would occur. However, Winfree's model was not popular because it was intractable, especially in the limit of large populations \cite{reviewstrogatz2000}. Kuramoto gradually refined Winfree's model: in 1975, he presented his first approach to the synchronization problem. Kuramoto's model \cite{kuramoto1975self} described a system of weakly coupled and nearly identical interacting limit cycle oscillators where each influences the phase of connected oscillators. His model was proven more popular than Winfree's due to its ease in displaying various synchronization behaviors. For a system of N oscillators, the Kuramoto equations are usually written as
\begin{align}\label{first}
{\dot{\theta}}_n = \omega_n + \frac{\lambda}{N} \sum^{N}_{m=1} \text{sin}(\theta_m - \theta_n)
\end{align}
%Many insightful theories \cite{hsia2019global, strogatz2000kuramoto, mirollo2007spectrum, dorfler2012synchronization} and applications in abstract spaces \cite{bronski2021synchronisation, thumler2023synchrony} have been developed using this model.
where $\theta_n$ is the real phase angle of the \textit{n}th oscillator and $\omega_n$ is its natural frequency, and  $\lambda$ is the coupling strength. This model shows that synchronization happens when the coupling strength surpasses a certain threshold.

Past work on expanding algebras on existing problems is a great motivator for new problems. For instance, it was demonstrated that the extension from real numbers to more general algebraic structures was fundamentally significant both in sciences and engineering. When classical Newtonian mechanics was notably superseded by Einstein's special relativity in Einstein's 1905 paper \cite{Einstein1905}, which demonstrated that physics in a four-dimensional pseudo-Riemannian manifold is more fundamental. 

Efforts to expand the Kuramoto model to complex numbers were done as far back as 1984 when it was expanded to include a complex order parameter to describe ``spontaneous" synchronization \cite{bookkuramoto1984}. Rigorous analysis of this order parameter was demonstrated in Strogatz's work \cite{reviewstrogatz2001, reviewstrogatz2000}. Various works have applied this order parameter to demonstrate synchronization \cite{PhysRevE.76.057201, PhysRevE.72.046211, 1383983, Kincomplex, roberto2022, roberto2023, ha4}. In 2011, D{\"o}rfler and Bullo summarized different levels of synchronization and proved the necessary and sufficient conditions for classical Kuramoto problems and analyzed phase transitions of the system \cite{dorfler2011critical, dorfler2012synchronization, dorfler2014synchronization}. Several authors have also suggested the feasibility of a complexified Kuramoto model \cite{thumler2023synchrony} or a complex network \cite{bottcher2023complex}.

Moreover, there are currently many diverse efforts to further our understanding synchronization through the Kuramoto model, such as demonstrating the viability of expanding the Kuramoto model into non-Abelian generalizations \cite{lohe, deville2019, ha, ha2, ha3}, applying seminorms to describe frequency synchronization \cite{bronski2021} and analytically expressing frequency synchronization solutions using distributions \cite{bronski2011}.

For our work, we consider three characteristics of this problem: firstly ``phase locking", the boundedness of maximum phase difference; secondly, ``phase synchronization", the convergence of phase differences to zero; and lastly, ``frequency synchronization" when all frequencies converge to a constant. Then, we expand the Kuramoto model with quaternions $\mathbb{H}$, which are numbers of a noncommutative division algebra represented in the form $$w+x\mathbf{i}+y\mathbf{j}+z\mathbf{k},$$ where $w,x,y,z\in\mathbb{R}$ and $\mathbf{i},\mathbf{j},\mathbf{k}$ are basis satisfying $\mathbf{i}^2=\mathbf{j}^2=\mathbf{k}^2=\mathbf{ijk}=-1$. Although quaternion algebra is a choice and not necessary for researching problems in natural phenomena, several authors have begun to investigate theories related to quaternion algebra, such as special relativity, electrodynamics \cite{de1996quaternions, weiss1940some, saue2005spin, ward2012quaternions}, and Pauli spinors \cite{cahay2019quaternion, ilamed1981algebras, saue1999quaternion}. In 1944, Onsager's paper \cite{Onsager1944} introduced quaternion algebra to analyze the phase transition in the Ising model. Many theoretical analyses on the condensed matter research also appeal to quaternions \cite{li2013high, PhysRevLett.123.254101, PhysRevLett.125.053601, PhysRevLett.125.033901, PhysRevLett.128.246601, PhysRevLett.127.170602}. Other applications of quaternionic structure are in biology, especially in DNA digital data storage \cite{chang2012dna, AndrzejK, 1416318, 4365821, Iqbal} and applications in computer graphics  \cite{pletinckx1989quaternion, vince2011quaternions}. 

%Lyapunov functions are essential to analyze the synchronization arising from real Kuramoto model.
%{\color{blue}(which does not exist in real case?)}
Here, we distinguish the solutions for the strong and weak cases, where $\lambda > \lambda_c$ and $\lambda < \lambda_c$ for strong and weak respectively (Definition \ref{def 5}). In the strong case, the Lyapunov energy method for frequency synchronization is used. In 1993, Van Hemmen and Wreszinski \cite{van1993lyapunov} introduced Lyapunov function to demonstrate that phase-locking and frequency synchronization are equivalent in fully connected topology. Recent work has demonstrated the tractability of this method: Hsia and his collaborators used this Lyapunov energy method for real Kuramoto models with inertia, external control, and different topological structures \cite{hsia2019synchronization, chen2021synchronization, chen2022frequency, chen2020mathematical}. 

In this paper, we propose a quaternionic Kuramoto model. For the strong coupling strength, we applied an appropriate Lyapunov energy function to control the real part of each oscillator which is shown in Section \ref{sec 3}. For the case when coupling strength is weak, we apply the new notions stated in Section \ref{sec 4}.

We organize the paper in the following order. In Section \ref{sec 2} we introduce the quaternionic Kuramoto model \eqref{Q K model}. In Section \ref{sec 2.1},  we use Pauli matrices as matrix representations of quaternions $\mathbb{H}$ to define fundamental functions of quaternions (Theorem \ref{thm for sine}). Then we obtain the main system of equations stated in \eqref{main eq}. In Section \ref{sec 2.2}, we clarify different levels of synchronization for the quaternionic Kuramoto model (Definition \ref{def 2}-\ref{def 4}). It is worth mentioning that phase-locking (Definition \ref{def 2}) and frequency synchronization (Definition \ref{def 3}) are equivalent in the real Kuramoto model but this equivalence does not hold in the quaternionic Kuramoto model. We also introduce a critical coupling strength, $\lambda_c$, in Definition \ref{def 5}. By using $\lambda_c$, we can separate the quaternionic Kuramoto model into strong (Section \ref{sec 3}) and weak cases (Section \ref{sec 4}).

%{\color{red} Since the interaction between the real part $w$ and three imaginary parts $(x,y,z)$, we need to control them one by one.} 
In Section \ref{sec 3}, we consider the case when the coupling strength is strong for $N \geqslant 2$ oscillators. Since the real and imaginary oscillations can interact with each other, we first describe the oscillations of both parts individually by alternating the control between the real and imaginary parts. We restrict the difference of the real part between each oscillator. Then, we demonstrate that the oscillators arrive at frequency and phase synchronization exponentially in each of the three imaginary parts. Once we achieve this, we demonstrate that the system achieves frequency synchronization by using the Lyapunov energy function \eqref{H function} stated in Theorem \ref{F sync}. We also prove that the system achieves phase synchronization (Theorem \ref{3.5}) when all of the natural frequencies are the same.

In Section \ref{sec 4}, we consider the weak coupling case. The two-oscillators ($N=2$) case is analyzed in Section \ref{sec 4.1}. It is well-known that it is impossible to arrive at any synchronization when the coupling strength is weak, i.e., $\lambda<\lambda_c$. However, in the quaternionic Kuramoto model, periodic orbits emerge near each equilibrium point and the trajectories of the phase difference for these two oscillators lie on a $4$ dimensional ``peach ring" (Fig.~\ref{peachring}). We introduce the so-called $\delta/n$  criterion (Lemma \ref{delta n criterion}) and use a symmetry argument (Lemma \ref{sym prop}) to allow control of the trajectories and show that every trajectory with initial condition located above the equilibria forms a closed contour. Once we obtain these results, we construct a deceleration region to demonstrate that trajectories form a layered contour that resembles a tree's growth rings, like a set of ``infinitely nested closed contours, one within another." In Section \ref{sec 4.2}, we analyze the ``Lion Dance flow", the analog of the ``Cherry flow" regime \cite{maistrenko2004mechanism}. First, we consider the ``super weak" case, i.e., $\lambda<\Lambda_c$ in Section \ref{4.2.1}. When the coupling strength is super weak, the system does not have any synchronization in real Kuramoto models. Then in Section \ref{4.2.1}, we use the horizontal cutting method to locate two eqilibria (Lemma \ref{lemma 4.5}) and Hartman–Grobman theorem to demonstrate that the stable manifold of phase-locking and frequency synchronization exist. 

Finally, when the coupling strength satisfies $\Lambda_c \leqslant \lambda<\lambda_c$ (Section \ref{4.2.2}), we show there is a quaternionic phase-locking. There's another phase-locking that is also observed in real Kuramoto models, which is semistable when $\lambda$ is equal to critically weak $\Lambda_c$. This phase-lock will bifurcate into two phase-locking states: one is stable while the other is unstable when the coupling strength $\lambda$ exceeds critically weak $\Lambda_c$.

\section{The quaternionic Kuramoto model}\label{sec 2}
Here we propose the Kuramoto model with quaternions as follows.
\begin{align} \label{Q K model}
    \dot{q}_n(t)=\omega_n+\dfrac{\lambda}{N}\sum_{m=1}^N \sin(q_m(t)-q_n(t)),
\end{align}
where $n\in\{1,...,N\}$. Here the $n$-th oscillator is represented by a phase variable $q_n=w_n+x_n \mathbf{i}+y_n \mathbf{j}+z_n \mathbf{k}\in\mathbb{H}$, and $\omega_n\in\mathbb{R}$ is its natural frequency. We denote $\lambda>0$ as coupling strength, which is a key parameter in this system. Considering an invariant manifold $x_n\equiv y_n\equiv z_n \equiv 0$, this equation \eqref{Q K model} reduces to the classical Kuramoto model in \cite{kuramoto1975self, bookkuramoto1984}. 

\subsection{Preliminaries.} \label{sec 2.1}
Before delving deeper into the main equation \eqref{sec 2}, let's establish the precise definitions of fundamental functions—namely, the exponential, sine, and cosine functions—within the context of quaternions. It is well-known that quaternions can be represented as $2\times 2$ complex matrices (see, for instance, \cite{voight2021quaternion, hanson2005visualizing}). 

Let us consider Pauli matrices as follows: 
\begin{align}
     \mathrm{I}=\begin{pmatrix}
    1 & 0 & \\   
    0 & 1 & 
\end{pmatrix}
    ~~\sigma_x=\begin{pmatrix}
    0 & 1 & \\   
    1 & 0 & 
\end{pmatrix},~~\sigma_y=\begin{pmatrix}
    0 & -i & \\   
    i & 0 & 
\end{pmatrix}, ~~\sigma_z=\begin{pmatrix}
    1 & 0 & \\   
    0 & -1 &
\end{pmatrix},
\end{align}
whose multiplication rules satisfy
\begin{align*}
&\sigma_x^2=\sigma_y^2=\sigma_z^2=\mathrm{I},~~~~~\sigma_x\sigma_y=-\sigma_y\sigma_x=i\sigma_z,\\
&\sigma_y\sigma_z=-\sigma_z\sigma_y=i\sigma_x,~ \sigma_z\sigma_x=-\sigma_x\sigma_z=i\sigma_y.
\end{align*}
Therefore, the matrix representation of quaternions $\mathbb{H}$ can be regarded as elements of $M_2(\mathbb{C})$ in the embedding with 
\begin{align}
    \phi:\mathbb{H}\mapsto M_2(\mathbb{C}),
\end{align}
where
\begin{align}
\phi(1)=\mathrm{I},~ \phi(\mathbf{i})=-i\sigma_x, ~\phi(\mathbf{j})=-i\sigma_y, ~\phi(\mathbf{k})=-i\sigma_z.
\end{align}
A straightforward computation reveals
\begin{align}
    \phi(w+x\mathbf{i}+y\mathbf{j}+z\mathbf{k})= 
\begin{pmatrix}
 &u        &-v  \\   
 &\bar{v}  &\bar{u}  
\end{pmatrix},
\end{align}
where $u:=w-zi$ and $v:=y+xi$, and ~$\bar{}$ ~denotes complex conjugation.  Let
\begin{align}
    M:=\Bigg\{\begin{pmatrix}
    a & b & \\   
    c & d & 
\end{pmatrix}\in M_2(\mathbb{C}): a=\bar{d}, ~ b=-\bar{c}\Bigg\},
\end{align}
so it is obvious that $\phi$ is isomorphic from $\mathbb{H}$ to $M$. We may define the exponential, sine, and cosine functions on a quaternion as follows.
\begin{definition}[Exponential, Sine, Cosine functions on $\mathbb{H}$]
Given $q\in\mathbb{H}$,
\begin{align*}
    \exp(q):=&\phi^{-1}\exp(\phi(q)),\\
    \sin(q):=&\phi^{-1}\sin(\phi(q)),\\
    \cos(q):=&\phi^{-1}\cos(\phi(q)).
\end{align*}
\end{definition}

We pause to remark that $\phi(q)$ is a $2\times 2$ matrix with complex elements, and for all $A\in M$ the fundamental functions of a matrix are defined as follows
\begin{align*}
    \exp(A):=\sum_{n=0}^{\infty} \dfrac{1}{n!} A^{n},~~\sin(A):=\sum_{n=0}^{\infty} \dfrac{(-1)^n}{(2n+1)!} A^{2n+1},
\end{align*}
and
\begin{align*}
    \cos(A):=\sum_{n=0}^{\infty} \dfrac{(-1)^n}{(2n)!} A^{2n}.
\end{align*}
A straightforward calculation immediately reveals the following theorem.
\\
\begin{theorem} \label{thm for sine}
Given $ q=w+x \mathbf{i}+y \mathbf{j}+ z\mathbf{k}\in \mathbb{H}$, the exponential, sine, cosine functions of a quaternion are\\
\begin{align*}
    e^q=e^w\left( \cos(\sqrt{x^2+y^2+z^2})+\dfrac{x i+y j+z k}{\sqrt{x^2+y^2+z^2}}\sin(\sqrt{x^2+y^2+z^2})\right),
\end{align*}
\begin{align*}
\sin(q)=\sin(w)\cosh(\sqrt{x^2+y^2+z^2})+\cos(w)\sinh(\sqrt{x^2+y^2+z^2})\dfrac{x\mathbf{i}+y\mathbf{j}+z\mathbf{k}}{\sqrt{x^2+y^2+z^2}},
\end{align*}
and
\begin{align*}
\cos(q)=\cos(w)\cosh(\sqrt{x^2+y^2+z^2})+\sin(w)\sinh(\sqrt{x^2+y^2+z^2})\dfrac{x\mathbf{i}+y\mathbf{j}+z\mathbf{k}}{\sqrt{x^2+y^2+z^2}}.
\end{align*}
\end{theorem}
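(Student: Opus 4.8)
The plan is to push everything through the matrix representation $\phi$ and exploit the single algebraic fact that the ``imaginary part'' of $\phi(q)$ squares to a negative scalar multiple of the identity. First I would write $\phi(q)=w\,\mathrm{I}+V$, where $V:=x\phi(\mathbf{i})+y\phi(\mathbf{j})+z\phi(\mathbf{k})=-i(x\sigma_x+y\sigma_y+z\sigma_z)$ collects the three imaginary components. Setting $r:=\sqrt{x^2+y^2+z^2}$, the key computation is that
\begin{align*}
V^2=-\left(x\sigma_x+y\sigma_y+z\sigma_z\right)^2=-r^2\,\mathrm{I},
\end{align*}
which follows at once from $\sigma_x^2=\sigma_y^2=\sigma_z^2=\mathrm{I}$ together with the anticommutation relations $\sigma_x\sigma_y=-\sigma_y\sigma_x$, $\sigma_y\sigma_z=-\sigma_z\sigma_y$, $\sigma_z\sigma_x=-\sigma_x\sigma_z$, so that every cross term cancels. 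This identity is the crux of the whole argument: it makes the normalized matrix $V/r$ behave exactly like the scalar imaginary unit $i$, which is precisely what forces hyperbolic functions to appear in the $w$-direction and trigonometric behaviour in the imaginary direction.

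Second, I would resum the defining power series. Since $w\,\mathrm{I}$ is central it commutes with $V$, so $\exp(\phi(q))=e^{w}\exp(V)$, and the matrix angle-addition identities $\sin(A+B)=\sin A\cos B+\cos A\sin B$ and $\cos(A+B)=\cos A\cos B-\sin A\sin B$ are valid for the commuting pair $A=w\,\mathrm{I}$, $B=V$. Splitting each series into even and odd powers and inserting $V^{2n}=(-1)^n r^{2n}\,\mathrm{I}$ and $V^{2n+1}=(-1)^n r^{2n}V$ collapses them to closed form:
\begin{align*}
\exp(V)=\cos(r)\,\mathrm{I}+\tfrac{\sin r}{r}\,V,\qquad \cos(V)=\cosh(r)\,\mathrm{I},\qquad \sin(V)=\tfrac{\sinh r}{r}\,V.
\end{align*}
All of these rearrangements are legitimate because $\exp$, $\sin$, and $\cos$ are entire and $M_2(\mathbb{C})$ is finite dimensional, so each series converges absolutely in the operator norm.

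Third, I would transport the results back through $\phi^{-1}$. Since $\phi^{-1}(\mathrm{I})=1$ and $\phi^{-1}(V)=x\mathbf{i}+y\mathbf{j}+z\mathbf{k}$ by construction, combining $\exp(\phi(q))=e^{w}\bigl(\cos r\,\mathrm{I}+\tfrac{\sin r}{r}V\bigr)$ with the two addition formulas applied to $\sin(w\,\mathrm{I}+V)$ and $\cos(w\,\mathrm{I}+V)$, and then applying $\phi^{-1}$ termwise, produces exactly the claimed closed forms for $e^{q}$, $\sin q$, and $\cos q$.

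The argument is essentially routine once $V^2=-r^2\,\mathrm{I}$ is established, so I do not anticipate a genuine obstacle; the only points requiring care are (i) checking that the candidate matrices $\cos r\,\mathrm{I}+\tfrac{\sin r}{r}V$ and the like actually lie in the image $M$ of $\phi$ so that $\phi^{-1}$ may legitimately be applied, which is immediate because they are real-linear combinations of $\mathrm{I}$ and $V$, both of which lie in $M$; and (ii) the degenerate case $r=0$, i.e.\ $x=y=z=0$, where the factors $\tfrac{\sin r}{r}$ and $\tfrac{\sinh r}{r}$ must be interpreted as their common limit $1$, thereby recovering the usual real-variable identities.
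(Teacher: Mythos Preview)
Your argument is correct and takes a genuinely different route from the paper. The paper proceeds by explicitly diagonalizing the $2\times 2$ matrix $\phi(q)=SJS^{-1}$, with $J$ having the complex eigenvalues $w\pm i\sqrt{x^2+y^2+z^2}$, then evaluates $\sin(J)$ entrywise and conjugates back by $S$ before reading off the quaternion via $\phi^{-1}$. You instead split $\phi(q)=w\,\mathrm{I}+V$ and exploit the single identity $V^2=-r^2\,\mathrm{I}$ to collapse the power series for $\exp$, $\sin$, $\cos$ directly, invoking the scalar addition formulas for the commuting pair $(w\,\mathrm{I},V)$. Your method is cleaner: it avoids writing down $S$ and $S^{-1}$ and the subsequent $2\times 2$ matrix multiplication, it makes transparent why hyperbolic functions appear (the sign flip in $V^2$ swaps the alternating signs in the sine/cosine series), and it generalizes immediately to any associative algebra containing an element that squares to a negative scalar. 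The paper's diagonalization is more hands-on and perhaps more reassuring to a reader unfamiliar with functional calculus, but at the cost of heavier bookkeeping. One minor remark: your addition formula $\cos(A+B)=\cos A\cos B-\sin A\sin B$ yields a \emph{minus} sign in front of the $\sin(w)\sinh(r)$ term of $\cos q$, which is indeed the correct sign (consistent with the complex identity $\cos(w+ix)=\cos w\cosh x - i\sin w\sinh x$); the plus sign in the displayed statement appears to be a typographical slip.
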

\begin{proof}
    The proof is fundamental, so we only demonstrate the sine function. Given $q=w+x \mathbf{i}+y \mathbf{j}+ z\mathbf{k}\in \mathbb{H}$,
    \begin{align}
        \phi(q)=\begin{pmatrix}
 w-z i & -y-x i & \\   
 y-x i & ~~~w+z i & 
\end{pmatrix}=SJS^{-1},
    \end{align}
where 
\begin{align}
    S=\begin{pmatrix}
 \dfrac{z+\sqrt{x^2+y^2+z^2}}{x+iy } & \dfrac{z-\sqrt{x^2+y^2+z^2}}{x+iy } & \\   
 1 & 1 & 
\end{pmatrix},
\end{align}
and
\begin{align}
    J=\begin{pmatrix}
 w-i\sqrt{x^2+y^2+z^2} & 0 & \\   
 0 & w+i\sqrt{x^2+y^2+z^2} & 
\end{pmatrix}.
\end{align}
Denote $|v|=\sqrt{det(\phi(q))-w^2}$. Then a straightforward calculation shows that: 
\begin{align*}
    &\sin(\phi(q))=S\sin(J)S^{-1} \\
    =&\begin{pmatrix}
  \sin(w)\cosh(|v|)-i\cos(w)\sinh(|v|)\dfrac{z}{|v|}& \cos(w)\sinh(|v|)\dfrac{-y-i x}{|v|} & \\   
 \cos(w)\sinh(|v|)\dfrac{y-i x}{|v|} & \sin(w)\cosh(|v|)+i\cos(w)\sinh(|v|)\dfrac{z}{|v|} & 
\end{pmatrix}
\end{align*}
Therefore, we obtain
\begin{align*}
\sin(q)=\sin(w)\cosh(\sqrt{x^2+y^2+z^2})+\cos(w)\sinh(\sqrt{x^2+y^2+z^2})\dfrac{x\mathbf{i}+y\mathbf{j}+z\mathbf{k}}{\sqrt{x^2+y^2+z^2}}.
\end{align*}
We may follow a similar argument to find $e^q$ and $\cos(q)$, so the proof is complete.
\end{proof}

By means of the Theorem \ref{thm for sine}, the real and imaginary parts of equation \eqref{Q K model} can be separated and written as\\
\begin{equation}\label{main eq}
\left\{ \begin{aligned}
&\dot{w}_n=\omega_n+\dfrac{\lambda}{N}\sum_{m=1}^N \sin(w_m-w_n)\cosh(v_{mn}),~\\ 
&\dot{x}_n=~~~~~~~~\dfrac{\lambda}{N}\sum_{m=1}^N \cos(w_m-w_n)\sinh(v_{mn})\dfrac{x_m-x_n}{v_{mn}},\\ 
&\dot{y}_n=~~~~~~~~\dfrac{\lambda}{N}\sum_{m=1}^N \cos(w_m-w_n)\sinh(v_{mn})\dfrac{y_m-y_n}{v_{mn}},\\
&\dot{z}_n=~~~~~~~~\dfrac{\lambda}{N}\sum_{m=1}^N \cos(w_m-w_n)\sinh(v_{mn})\dfrac{z_m-z_n}{v_{mn}},\\
\end{aligned} 
\right. 
\end{equation}
where for each $(m,n)\in \{1,...,N\}^2$
\begin{align} \label{def of v_mn}
    v_{mn}:=\sqrt{|x_m-x_n|^2+|y_m-y_n|^2+|z_m-z_n|^2} .
\end{align}
% \color{red}

% (The existence and uniqueness of solution? Picard-Lindelof theorem requires Lipschitz continuity. No problem for real Kuramoto, but complex and quaternion?)
\color{black}
\subsection{Definitions of synchronization} \label{sec 2.2}
In order to construct a sufficient condition on 
synchronized state for system equations \eqref{main eq}, we will first introduce some notations and define different levels of synchronization. Let
\begin{align}
 q(t)=(w(t),x(t),y(t),z(t)):=(w(t),\mathbf{v}(t)),
\end{align}
by abuse of the notation, denote a function $q:\mathbb{R}^+\rightarrow\mathbb{R}^{4N}$, and 
\begin{align}
 \boldsymbol{\omega}=(\omega_1,\omega_2,...,\omega_N)    
\end{align}
be a constant in $\mathbb{R}^{N}$. We observe that 
\begin{align}
    \dfrac{d}{dt} (w_1+...+w_N)=\omega_1+...+\omega_N.
\end{align}
This shows that the overall frequency for the real parts is the mean (average) of the natural frequencies. It is often useful to introduce a rotating frame by using the changes of the variables
\begin{align}
    w_n \mapsto w_n-t(\omega_1+...+\omega_N)/N.
\end{align}
This observation may allow us to assume 
\begin{align} \label{zero mean omega}
\omega_1+...+\omega_n=0,
\end{align}
without loss of generality. 
\\
\begin{definition} [Full phase-locking synchronization] \label{def 2}
    The quaternionic Kuramoto model \eqref{main eq} arrives at full phase-locking if 
    \begin{align}
        \sup_{t>0} \max_{n,m\in\{1,...N\}}|q_n(t)-q_m(t)|<\infty.
    \end{align}
\end{definition}
\begin{definition} [Frequency synchronization] \label{def 3}
    The quaternionic Kuramoto model achieves frequency synchronization if for all $(n,m)\in \{1,...,N\}^2$
    \begin{align}
        \lim_{t\rightarrow \infty} |\dot{q}_n(t)-\dot{q}_m(t)|=0. 
    \end{align}
\end{definition}
\begin{definition} [Phase synchronization] \label{def 4}
    The quaternionic Kuramoto model achieves phase synchronization if for all $(n,m)\in \{1,...,N\}^2$
    \begin{align}
        \lim_{t\rightarrow \infty} |q_n(t)-q_m(t)|=0. 
    \end{align}
\end{definition}
\begin{definition} [Critical coupling strength] \label{def 5}
    For the quaternionic Kuramoto model, the critical coupling strength can be defined by
    \begin{align} \label{eq:lambda_c}
        \lambda_c:= \max_{n,m\in\{1,...N\}} |\omega_n-\omega_m|.
    \end{align}
\end{definition}

We pause to remark that the norm of $q_n$ can be expressed as
\begin{align}
    |q_n|=\sqrt{w_n^2+x_n^2+y_n^2+z_n^2}=\sqrt{det(\phi(q_n))}.
\end{align}
Also, the Definition \ref{def 3} is equivalent to the following condition
\begin{align}
    \lim_{t\rightarrow \infty} |\dot{q}_n(t)|=0, ~\mbox{for}~\mbox{all}~n\in\{1,...,N\},
\end{align}
since we assume the mean of natural frequencies is zero in \eqref{zero mean omega}. Finally, we want to emphasize that in real Kuramoto models, one can prove that the solution arrives at phase-locking synchronization if and only if it achieves frequency synchronization. However, this is not true in the quaternionic Kuramoto model, as we will emphasize in the following sections.
\\
\section{Strong coupling strength} \label{sec 3}
% {\color{red} Need to remark what `strong' coupling means, then say that this can be observed in Lemma 3.3-3.5 here}
\begin{lemma} [Real part phase-locking] \label{thm 2}
    Let coupling strength $\lambda>\lambda_c/\sin(\delta)$ for some $0<\delta<\pi$. Assume that the solution $q(t)$ of system equations \eqref{main eq} satisfies initial condition $w_n(0)\in [0,\pi-\delta]$ for all $n\in\{1,...,N\}$. Then the solution achieves: 
    \begin{align}
        \sup_{t>0} \max_{n,m\in\{1,...N\}}|w_n(t)-w_m(t)|<\infty.
    \end{align}
\end{lemma}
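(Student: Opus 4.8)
The plan is to control the spread of the real parts through the single scalar
$\Delta(t):=\max_{n} w_n(t)-\min_{n} w_n(t)=\max_{n,m}|w_n(t)-w_m(t)|$
and to show that the interval $[0,\pi-\delta]$ is forward invariant for $\Delta$. The hypothesis $w_n(0)\in[0,\pi-\delta]$ gives $\Delta(0)\le\pi-\delta$, so this invariance immediately yields $\sup_{t>0}\Delta(t)\le\pi-\delta<\infty$, which is the assertion. I work on the maximal interval of existence of the solution.

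First I would derive a differential inequality for $\Delta$. As the difference of the maximum and the minimum of finitely many $C^1$ functions, $\Delta$ is locally Lipschitz, and its upper-right Dini derivative satisfies $D^{+}\Delta(t)\le\dot w_M(t)-\dot w_\mu(t)$ for \emph{any} indices $M,\mu$ attaining the maximum and the minimum at time $t$. Using the first line of \eqref{main eq}, the task reduces to bounding the coupling sum in $\dot w_M-\dot w_\mu$ whenever $\Delta<\pi$.

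The crucial step is to discard the hyperbolic factors and reduce to a pure real-Kuramoto estimate. Because $M$ is an argmax and $\mu$ an argmin, for every $m$ one has $w_m-w_M\le 0$ and $w_m-w_\mu\ge 0$, both lying in $(-\pi,\pi)$ while $\Delta<\pi$; hence $\sin(w_m-w_M)\le 0$ and $\sin(w_m-w_\mu)\ge 0$. Since $\cosh(v_{mM}),\cosh(v_{m\mu})\ge 1$, the nonpositive term becomes only more negative and the nonnegative term (which enters with a minus sign) only larger, so termwise $\sin(w_m-w_M)\cosh(v_{mM})-\sin(w_m-w_\mu)\cosh(v_{m\mu})\le\sin(w_m-w_M)-\sin(w_m-w_\mu)$. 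This is exactly where the real–imaginary interaction is tamed: the factors $\cosh\ge 1$ reinforce rather than oppose the contraction at the boundary of the region. Setting $c:=(w_M+w_\mu)/2$ and applying a product-to-sum identity gives $\sin(w_m-w_M)-\sin(w_m-w_\mu)=-2\sin(\Delta/2)\cos(w_m-c)$, and since every $w_m\in[w_\mu,w_M]$ forces $|w_m-c|\le\Delta/2<\pi/2$, each cosine obeys $\cos(w_m-c)\ge\cos(\Delta/2)>0$, so the full sum over all $N$ indices is at least $N\cos(\Delta/2)$. Multiplying by $\lambda/N$ and collecting, and using $\omega_M-\omega_\mu\le\lambda_c$ from Definition \ref{def 5}, I obtain $D^{+}\Delta\le(\omega_M-\omega_\mu)-\lambda\sin\Delta\le\lambda_c-\lambda\sin\Delta$.

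It then remains to close the invariance argument. Evaluating at the boundary value $\Delta=\pi-\delta$ and using $\lambda>\lambda_c/\sin\delta$ yields $D^{+}\Delta\le\lambda_c-\lambda\sin\delta<0$. If $\Delta$ ever exceeded $\pi-\delta$, then setting $t_*:=\inf\{t:\Delta(t)>\pi-\delta\}$ gives $\Delta(t_*)=\pi-\delta$ by continuity while the definition of $t_*$ forces $D^{+}\Delta(t_*)\ge 0$, contradicting the strict negativity just shown; hence $\Delta(t)\le\pi-\delta$ for all $t$. I expect the main obstacle to be the middle step: obtaining the sharp threshold $\lambda_c/\sin\delta$ (rather than a weaker, $N$-dependent one) requires lower-bounding the \emph{entire} cosine sum by $N\cos(\Delta/2)$, keeping all $N$ terms, instead of only the two extremal ones; by comparison, the non-smoothness of $\max$ and $\min$ is a routine technicality dispatched by Dini derivatives.
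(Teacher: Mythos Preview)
Your proof is correct and follows essentially the same approach as the paper: both track the maximal real-part spread, drop the $\cosh\ge 1$ factors using the signs of $\sin(w_l-w_M)\le 0$ and $\sin(w_l-w_\mu)\ge 0$ at the extremal indices, apply the product-to-sum identity to obtain the bound $\lambda_c-\lambda\sin\Delta$, and close with an invariance/first-hitting-time contradiction at $\Delta=\pi-\delta$. Your presentation is slightly more polished in that you make the Dini-derivative handling of the non-smooth max/min explicit, whereas the paper argues directly at the first time a pair reaches $\pi-\delta$; the substance is identical.
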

\begin{proof}
    Firstly, we assume $w_n(0)\in (0,\pi-\delta)$ for all $n\in\{1,...,N\}$. For each pair $(n,m) \in \{1,...,N\}^2$, we claim that the phase difference between each pair for real parts is bounded as follows
    \begin{align} \label{bdd of w}
        |w_n(t)-w_m(t)| < \pi-\delta,
    \end{align}
    for all $t \geqslant 0$. Suppose, on the contrary, that \eqref{bdd of w} does not hold. It means that there exist a $t^*>0$, and a pair $(n,m)\in \{1,...N\}^2 $ such that
    \begin{align} \label{=pi-delta}
        |w_n(t^*)-w_m(t^*)|=\pi-\delta
    \end{align}
    and 
    \begin{align} \label{< pi-delta}
        \sup_{t\in[0,t^*-\epsilon]} |w_s(t)-w_l(t)|<\pi-\delta,
    \end{align}
    for each pair $(s,l) \in \{1,...,N\}^2$, for all $0<\epsilon \ll\ 1$.
    We may assume $w_n(t^*)-w_m(t^*)>0$ without loss of generality. Hence by means of \eqref{< pi-delta}, it is clear that $w'_n(t^*)-w'_m(t^*) \geqslant 0$. However, note that the real part difference dynamics of these two oscillators using system equations \eqref{main eq} can be written as 
    \begin{align*} \notag
        \dot{w}_n(t)-\dot{w}_m(t)=&\;\omega_n-\omega_m+\dfrac{\lambda}{N} \sum_{l=1}^N \Bigg(\sin(w_l-w_n)\cosh(v_{ln})\\ \notag
        &~~~~~~~~~~~~~~~~~~~~~~~~~-\sin(w_l-w_m)\cosh(v_{lm})\Bigg).\\
    \end{align*}
    Therefore, through \eqref{=pi-delta}, we obtain
    \begin{align} \notag
        \dot{w}_n(t^*)-\dot{w}_m(t^*)=&\;\omega_n-\omega_m+\dfrac{\lambda}{N} \sum_{l=1}^N \Bigg(\sin(w_l-w_m-(\pi-\delta))\cosh(v_{ln})\\ \notag
        &~~~~~~~~~~~~~~~~~~~~~~~~~-\sin(w_l-w_m)\cosh(v_{lm})\Bigg)\\ \notag
        \leqslant&\;\lambda_c+\dfrac{\lambda}{N} \sum_{l=1}^N \Bigg(\sin(w_l-w_m-(\pi-\delta))-\sin(w_l-w_m)\Bigg)\\ \notag
        \leqslant&\;\lambda_c-\lambda \sin(\delta)\\ \label{wn-wm<0}
        <& \;0.
    \end{align}
    A contradiction then verifies the claim. Finally, if the initial conditions satisfy $w_n(0)\in [0,\pi-\delta]$ for all $n\in\{1,...,N\}$, then inequality \eqref{wn-wm<0} still holds when $t=0$. Therefore, the proof is completed.
    \end{proof}
    
    Our next goal is to analyze the sufficient condition of phase and frequency synchronization for imaginary parts. In order to control the $v_{nm}$, we need to further restrict the quantity for $w_n-w_m$ for each pair $(n,m)\in\{1,...,N\}^2$. \\

\begin{lemma} [Imaginary parts phase and frequency synchronization] \label{p and f sync for Im}
    Let $q(t)$ be the solution of system equations \eqref{main eq}. Suppose that $\sup_{t>0} |w_n(t)-w_m(t)|<\pi/2-\delta_0$, for some $0<\delta_0\ll 1$, for each pair $(n,m)\in\{1,...,N\}^2$. Then the imaginary part of the solution arrives at phase and frequency synchronization
     \begin{align}
        \lim_{t\rightarrow \infty} |\mathbf{v}_n(t)-\mathbf{v}_m(t)|=0,
    \end{align}
    and
    \begin{align}
        \lim_{t\rightarrow \infty} |\dot{\mathbf{v}}_n(t)-\dot{\mathbf{v}}_m(t)|=0. 
    \end{align}
\end{lemma}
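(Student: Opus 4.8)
The plan is to recognize the imaginary subsystem of \eqref{main eq} as a weighted \emph{consensus} (graph-Laplacian) flow with positive, symmetric, time-dependent weights, and then to run a quadratic Lyapunov energy argument. Writing $\mathbf{v}_n=(x_n,y_n,z_n)$, the last three lines of \eqref{main eq} collapse into the single vector equation
\begin{align*}
\dot{\mathbf{v}}_n=\sum_{m=1}^N a_{mn}\,(\mathbf{v}_m-\mathbf{v}_n),\qquad a_{mn}:=\frac{\lambda}{N}\cos(w_m-w_n)\frac{\sinh(v_{mn})}{v_{mn}},
\end{align*}
where $v_{mn}=|\mathbf{v}_m-\mathbf{v}_n|$ as in \eqref{def of v_mn} and the ratio is read as its limit $1$ when $v_{mn}=0$. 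The hypothesis $\sup_t|w_n-w_m|<\pi/2-\delta_0$ forces $\cos(w_m-w_n)>\sin(\delta_0)>0$, and since $\sinh(v)/v\geqslant 1$ for all $v\geqslant 0$, each weight is bounded below by the constant $c_0:=\lambda\sin(\delta_0)/N>0$. Crucially $a_{mn}=a_{nm}$, because both $v_{mn}$ and $\cos(\cdot)$ are symmetric.

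First I would record the two structural consequences of symmetry. Summing the vector equation over $n$ and using $a_{mn}=a_{nm}$ shows the barycenter $\bar{\mathbf{v}}:=\frac1N\sum_n\mathbf{v}_n$ is conserved, so I may pass to the deviations $\tilde{\mathbf{v}}_n:=\mathbf{v}_n-\bar{\mathbf{v}}$, which satisfy the same flow and obey $\sum_n\tilde{\mathbf{v}}_n=0$. Then I would introduce the energy
\begin{align*}
V(t):=\tfrac12\sum_{n=1}^N|\tilde{\mathbf{v}}_n(t)|^2,
\end{align*}
and compute, after symmetrizing the double sum,
\begin{align*}
\dot V=-\tfrac12\sum_{n,m}a_{mn}\,|\mathbf{v}_m-\mathbf{v}_n|^2\leqslant 0.
\end{align*}
Using the lower bound $a_{mn}\geqslant c_0$ together with the complete-graph identity $\tfrac12\sum_{n,m}|\tilde{\mathbf{v}}_m-\tilde{\mathbf{v}}_n|^2=N\sum_n|\tilde{\mathbf{v}}_n|^2=2NV$ (valid because $\sum_n\tilde{\mathbf{v}}_n=0$), this yields the differential inequality $\dot V\leqslant -2N c_0 V$. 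Grönwall then gives $V(t)\leqslant V(0)e^{-2\lambda\sin(\delta_0)\,t}$, and since $|\mathbf{v}_n-\mathbf{v}_m|^2\leqslant 4NV$ for every pair, the phase synchronization claim $|\mathbf{v}_n-\mathbf{v}_m|\to 0$ follows at an exponential rate.

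For the frequency statement it remains to bound $\dot{\mathbf{v}}_n$. Because $V$ is nonincreasing, $v_{mn}(t)\leqslant \sqrt{4NV(0)}=:R$ for all $t$, so monotonicity of $v\mapsto\sinh(v)/v$ gives the matching upper bound $a_{mn}\leqslant \frac{\lambda}{N}\frac{\sinh(R)}{R}=:C_0$. Hence $|\dot{\mathbf{v}}_n|\leqslant C_0\sum_m|\mathbf{v}_m-\mathbf{v}_n|\leqslant C_0N\max_{k,l}|\mathbf{v}_k-\mathbf{v}_l|\to 0$, so in particular $|\dot{\mathbf{v}}_n-\dot{\mathbf{v}}_m|\to 0$, which is the frequency synchronization claim. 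The main obstacle is the state-dependent factor $\sinh(v_{mn})/v_{mn}$: a priori it can blow up (it grows like $e^{v}/2v$), so the weights are not uniformly bounded above from the outset and one cannot treat the system as a linear consensus model with bounded coefficients. The resolution is the bootstrap just described — positivity of the weights, which needs only the $w$-gap hypothesis and no size bound, already forces $V$ to be monotone, and this a priori confinement of the $v_{mn}$ is precisely what turns $\sinh(v_{mn})/v_{mn}$ into a genuinely bounded coefficient and legitimizes $C_0$. A secondary point to verify is that the vector field is well defined and smooth across $\{v_{mn}=0\}$, since $\tfrac{\sinh|u|}{|u|}\,u$ is a smooth even function of $u\in\mathbb{R}^3$.
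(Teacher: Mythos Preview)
Your argument is correct, but it is genuinely different from the paper's. The paper does \emph{not} use a quadratic energy or the Laplacian/consensus structure; instead it works coordinate by coordinate with the scalar diameter $\mathbf{x}(t):=\max_{s,l}|x_s(t)-x_l(t)|$, picks a maximizing pair $(n,m)$, and bounds $\dot x_n-\dot x_m$ directly. The two key inequalities there are $\cos(w_l-w_n)\geqslant\sin\delta_0$ and $\sinh(v)/v\geqslant 1$, applied termwise with the sign information $x_l-x_n\leqslant 0$, $x_l-x_m\geqslant 0$; this yields $\frac{d}{dt}\mathbf{x}\leqslant -\lambda\sin(\delta_0)\,\mathbf{x}$ and hence \eqref{exponentially decreasing} by Gr{\"o}nwall, repeated verbatim for $y$ and $z$. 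Your route packages all three imaginary coordinates at once via the variance $V=\tfrac12\sum_n|\tilde{\mathbf v}_n|^2$, exploits the symmetry $a_{mn}=a_{nm}$ to get the clean dissipation identity $\dot V=-\tfrac12\sum a_{mn}|\mathbf v_m-\mathbf v_n|^2$, and then closes with the complete-graph spectral-gap identity $\tfrac12\sum|\mathbf v_m-\mathbf v_n|^2=2NV$. What your approach buys is a structural viewpoint (the imaginary block is literally a time-varying positive Laplacian flow) and a uniform vector treatment; what the paper's approach buys is that it never needs the barycenter conservation or the algebraic identity, and it avoids any worry about differentiability of a max by working with a piecewise smooth scalar. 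Both give the same exponential rate $\lambda\sin\delta_0$ (yours on $\sqrt V$, theirs on the diameter), and your bootstrap remark---that monotonicity of $V$ is what tames $\sinh(v_{mn})/v_{mn}$ a posteriori---is exactly the point the paper leaves implicit when it passes from phase to frequency synchronization.
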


\begin{proof}
    Let us focus on the first component of $\mathbf{v}(t)$. Define 
    \begin{align}
        \mathbf{x}(t):=\max_{s,l\in\{1,...,N\}}|x_s(t)-x_l(t)|.
    \end{align}
    It is obvious that $\mathbf{x}(t):\mathbb{R}^+\rightarrow \mathbb{R}$ is a continuous and piecewise smooth function. For any $t>0$, there exists a pair $(n,m)\in\{1,...,N\}^2$ satisfying $\mathbf{x}(t)=x_n(t)-x_m(t)$. A straightforward calculation for this phase difference reveals 
    \begin{align*}
        \dfrac{d}{dt} \mathbf{x}&=\dot{x}_n-\dot{x}_m\\
        &=\dfrac{\lambda}{N}\sum_{l=1}^N \Bigg(\cos(w_l-w_n)\sinh(v_{ln})\dfrac{x_l-x_n}{v_{ln}}\\
        &~~~~~~~~~~~~~~~-\cos(w_l-w_m)\sinh(v_{lm})\dfrac{x_l-x_m}{v_{lm}}\Bigg)\\
        &\leqslant \dfrac{\lambda\cos(\frac{\pi}{2}-\delta_0)}{N}\sum_{l=1}^N \Bigg(\sinh(v_{ln})\dfrac{x_l-x_n}{v_{ln}}-\sinh(v_{lm})\dfrac{x_l-x_m}{v_{lm}}\Bigg)\\
        &\leqslant \dfrac{\lambda\cos(\frac{\pi}{2}-\delta_0)}{N}\sum_{l=1}^N \Bigg( (x_l-x_n)-(x_l-x_m)\Bigg)\\
        &\leqslant -\lambda \sin(\delta_0) (x_n-x_m).\\
        &=-\lambda\sin(\delta_0) \mathbf{x}.
    \end{align*}
    Thanks to the Gr{\"o}nwall inequality, then for any $t>0$ we obtain,
    \begin{align} \label{exponentially decreasing}
        \mathbf{x}(t)\leqslant \mathbf{x}(0) \exp(-\lambda\sin(\delta_0) t).
    \end{align}
    This means that the first component of imaginary parts arrives at phase and frequency synchronization simultaneously.
    Additionally, by applying the same method to other imaginary parts, we attain
    \begin{align}
        \lim_{t\rightarrow \infty} |\mathbf{v}_n(t)-\mathbf{v}_m(t)|=0,
    \end{align}
    and
    \begin{align}
        \lim_{t\rightarrow \infty} |\dot{\mathbf{v}}_n(t)-\dot{\mathbf{v}}_m(t)|=0. 
    \end{align}
    This completes the proof.
\end{proof}

\begin{theorem} [Full phase-locking] \label{q p locking}
    Consider the coupling strength and initial conditions described in Lemma \ref{thm 2}. Then the solution $q(t)$ achieves
    \begin{align}
        \sup_{t>0} \max_{n,m\in\{1,...N\}}|q_n(t)-q_m(t)|<\infty.
    \end{align}
\end{theorem}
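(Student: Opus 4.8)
The plan is to split the quaternionic distance into its real and imaginary contributions and to bound each separately. Writing $q_n-q_m=(w_n-w_m)+(x_n-x_m)\mathbf{i}+(y_n-y_m)\mathbf{j}+(z_n-z_m)\mathbf{k}$, the quaternion norm decomposes orthogonally as
\begin{align*}
|q_n(t)-q_m(t)|^2=|w_n(t)-w_m(t)|^2+v_{mn}(t)^2,
\end{align*}
so it suffices to produce a finite, $t$-uniform bound for the real spread $|w_n-w_m|$ and for the imaginary spread $v_{mn}$ separately, and then take the maximum over the finitely many pairs $(n,m)$.

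The real-part bound is immediate from Lemma \ref{thm 2}: under the stated hypotheses its proof confines every pairwise difference to $|w_n(t)-w_m(t)|<\pi-\delta$ for all $t\geqslant 0$, which is exactly the desired uniform control of the first summand. For the imaginary parts I would invoke Lemma \ref{p and f sync for Im}, but that lemma requires the real spread to sit strictly inside the attractive window $|w_n-w_m|<\pi/2-\delta_0$. Since Lemma \ref{thm 2} only delivers the bound $\pi-\delta$, I would choose $\delta\in(\pi/2,\pi)$ — which is permitted in Lemma \ref{thm 2} — so that $\pi-\delta<\pi/2$, and then pick any $\delta_0\in(0,\delta-\pi/2)$. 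With this choice $\sup_{t>0}|w_n(t)-w_m(t)|<\pi-\delta<\pi/2-\delta_0$, so the hypothesis of Lemma \ref{p and f sync for Im} holds and its conclusion gives $\lim_{t\to\infty}v_{mn}(t)=\lim_{t\to\infty}|\mathbf{v}_n(t)-\mathbf{v}_m(t)|=0$. A continuous function on $[0,\infty)$ that tends to $0$ is bounded, so $\sup_{t>0}v_{mn}(t)<\infty$; in fact the Gr{\"o}nwall estimate \eqref{exponentially decreasing} already bounds each imaginary component by its initial spread, which is all I need here.

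Combining the two bounds yields
\begin{align*}
\sup_{t>0}\max_{n,m}|q_n(t)-q_m(t)|^2\leqslant(\pi-\delta)^2+\sup_{t>0}\max_{n,m}v_{mn}(t)^2<\infty,
\end{align*}
and taking square roots finishes the argument.

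The crux — and the reason this is not a one-line corollary — is the passage from the real bound to control of the imaginary parts. The imaginary dynamics in \eqref{main eq} carry the factor $\cos(w_m-w_n)\sinh(v_{mn})$: only when the real spread stays below $\pi/2$ is this cosine positive, turning the coupling into a contraction that suppresses the imaginary spread, whereas once the real spread exceeds $\pi/2$ the cosine changes sign and the $\sinh$ nonlinearity can drive the imaginary parts to grow exponentially, destroying phase-locking. Thus the main obstacle is guaranteeing that the real spread is trapped in the attractive regime $(-\pi/2,\pi/2)$, which is precisely why I strengthen $\delta$ past $\pi/2$ before chaining the two lemmas; everything else is the orthogonal decomposition and routine bookkeeping over the finitely many pairs.
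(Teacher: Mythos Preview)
Your decomposition and your identification of the crux---getting the real spread below $\pi/2$ so that the cosine factor stays positive---are both correct, but there is a genuine gap: you cannot simply ``choose $\delta\in(\pi/2,\pi)$.'' The parameter $\delta$ is part of the hypothesis of Lemma~\ref{thm 2} (and hence of Theorem~\ref{q p locking}); it simultaneously constrains the coupling strength via $\lambda>\lambda_c/\sin(\delta)$ and the initial data via $w_n(0)\in[0,\pi-\delta]$. For many admissible configurations no $\delta>\pi/2$ is available. For instance, take $\lambda=1.5\,\lambda_c$ and initial real phases spread over $[0,2]$: any admissible $\delta$ must satisfy $\sin(\delta)>2/3$ and $\pi-\delta\geqslant 2$, forcing $\delta\in(\arcsin(2/3),\pi-2)\subset(0.73,1.15)$, entirely below $\pi/2$. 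In such a regime some $\cos(w_m-w_n)$ are negative at $t=0$, the hypothesis of Lemma~\ref{p and f sync for Im} fails, and the $\sinh$ nonlinearity can drive the imaginary spread to \emph{grow} for a while---exactly the obstruction you yourself describe in your final paragraph.

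The paper closes this gap by a dynamic rather than static argument: it shows that whenever the real spread lies in $(\pi/2-\delta_0,\pi-\delta)$ one has $\dot w_{\max}-\dot w_{\min}\leqslant \lambda_c-\lambda\sin(\delta)<0$ (the $\cosh(v_{\cdot\cdot})\geqslant 1$ factors only strengthen this inequality), so the real spread contracts into $[0,\pi/2-\delta_0)$ after an explicit finite time $T_c$, and only then is Lemma~\ref{p and f sync for Im} invoked. Your argument is precisely the paper's ``easy case'' $\delta\geqslant\pi/2+\delta_0$, where the spread already starts in the attractive window; to prove the theorem as stated you still need the transient estimate that funnels the general case into that window.
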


\begin{proof}
    System equations \eqref{main eq} tell us that the sign of the cosine function plays an important role in determining whether the repulsion effect occurs in imaginary parts. Firstly, we claim that there exists $0<\delta_0 \ll 1$ such that 
    \begin{align}
        \limsup\limits_{t\rightarrow \infty}|w_n(t)-w_m(t)|\leqslant \dfrac{\pi}{2}-\delta_0,
    \end{align} for each pair $(n,m)\in\{1,...,N\}^2$. Recalling Lemma \ref{thm 2}, we immediately notice that $$\sup_{t>0}|w_n(t)-w_m(t)|<\pi-\delta \leqslant \pi/2-\delta_0$$ is true, if $\delta \geqslant \pi/2+\delta_0$. On the other hand, for $\delta<\pi/2-\delta_0$, combining the estimation \eqref{bdd of w} in Lemma \ref{thm 2} we obtain 
    \begin{align} \label{second claim}
        |w_{\mathrm{min}}(t)-w_{\mathrm{max}}(t)|<\pi-\delta,
    \end{align}
    for all $t>0$. Now suppose that there exists $t^*>0$ such that 
    \begin{align} \label{second claim}
        \dfrac{\pi}{2}-\delta_0<|w_{\mathrm{max}}(t^*)-w_{\mathrm{min}}(t^*)|<\pi-\delta,
    \end{align}
    where
    \begin{align}
    w_{min}(t^*)\leqslant w_l(t^*)\leqslant w_{max}(t^*),  
    \end{align}
    for all $l\in\{1,...,N\}$. Using the similar estimations established in the previous argument, denoting $w_{max}(t^*)-w_{min}(t^*)=\beta$ we obtain
    \begin{align*}
        \dot{w}_{max}(t^*)-\dot{w}_{min}(t^*)\leqslant&\;\lambda_c+\dfrac{\lambda}{N} \sum_{l=1}^N \Bigg(\sin(w_l(t^*)-w_{max}(t^*))-\sin(w_l(t^*)-w_{min}(t^*))\Bigg)\\
    \leqslant&\;\lambda_c+\max_{w^*\in[\pi/2-\delta_0,\beta]} \lambda\Bigg(\sin(w^*-\beta)-\sin(w^*)\Bigg)\\
    \leqslant&\;\lambda_c-\lambda\sin(\delta)    
    \end{align*}
    Since $\lambda\sin(\delta)-\lambda_c$ is strictly positive, we obtain
    \begin{align} 
        \sup_{t>T_c}\max_{n,m\in\{1,...,N\}}|w_n(t)-w_m(t)|<\dfrac{\pi}{2}-\delta_0,
    \end{align}
    where the critical time is
    \begin{align} \label{Big T}
           T_c=t^*+\dfrac{\beta-\frac{\pi}{2}+\delta_0} {\lambda\sin(\delta)-\lambda_c}.
    \end{align}

    By Lemma \ref{p and f sync for Im}, we obtain
    \begin{align}
        \sup_{t>0} \max_{n,m\in\{1,...N\}}|q_n(t)-q_m(t)|<\infty.
    \end{align}    
     \end{proof}
     
\begin{theorem} [Frequency synchronization] \label{F sync}
     Consider coupling strength $\lambda\sin(\delta)>\lambda_c$ for some $0<\delta<\pi$. Let $q(t)$ be the solution of system equations \eqref{main eq} satisfying initial condition $w_n(0)\in (0,\pi-\delta)$ for each $n\in\{1,...,N\}$. Then the solution arrives at frequency synchronization
     \begin{align}
        \lim_{t\rightarrow \infty} |\dot{q}_n(t)-\dot{q}_m(t)|=0,
    \end{align}
    for all $(n,m)\in\{1,...,N\}^2$.
\end{theorem}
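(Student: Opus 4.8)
The plan is to reduce the quaternionic frequency synchronization to two independent pieces—the three imaginary channels and the single real channel—and dispatch them separately. Writing $\dot q_n-\dot q_m=(\dot w_n-\dot w_m)+(\dot x_n-\dot x_m)\mathbf i+(\dot y_n-\dot y_m)\mathbf j+(\dot z_n-\dot z_m)\mathbf k$, one has $|\dot q_n-\dot q_m|^2=|\dot w_n-\dot w_m|^2+|\dot{\mathbf v}_n-\dot{\mathbf v}_m|^2$, so it suffices to show each summand tends to $0$. The imaginary part is already handled: the hypotheses $\lambda\sin(\delta)>\lambda_c$ and $w_n(0)\in(0,\pi-\delta)$ are exactly those of Lemma \ref{thm 2}, and the argument in the proof of Theorem \ref{q p locking} upgrades the bound to $\sup_{t>T_c}|w_n(t)-w_m(t)|<\pi/2-\delta_0$ for some $0<\delta_0\ll1$. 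Restarting the (autonomous) clock at $T_c$, Lemma \ref{p and f sync for Im} then yields $|\dot{\mathbf v}_n-\dot{\mathbf v}_m|\to0$, together with the exponential bound \eqref{exponentially decreasing} on the imaginary differences. Everything therefore comes down to proving $\dot w_n(t)\to0$ for every $n$.

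For the real channel I would run a Lyapunov/energy argument in the spirit of Van Hemmen--Wreszinski. Consider the Kuramoto potential
\[
H(w):=-\sum_{n=1}^N\omega_n w_n-\frac{\lambda}{2N}\sum_{n,m=1}^N\cos(w_m-w_n).
\]
If the factor $\cosh(v_{mn})$ in \eqref{main eq} were identically $1$, the real flow would be the exact gradient flow $\dot w_n=-\partial H/\partial w_n$ and we would have $\dot H=-\sum_n\dot w_n^2\le0$. In general I write $\dot w_n=-\partial H/\partial w_n+R_n$ with the defect
\[
R_n(t):=\frac{\lambda}{N}\sum_{m=1}^N\sin(w_m-w_n)\bigl(\cosh(v_{mn})-1\bigr),
\]
so that along trajectories
\[
\frac{d}{dt}H=-\sum_{n=1}^N\dot w_n^2+\sum_{n=1}^N R_n\,\dot w_n.
\]

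The crux is that the defect term is integrable. By phase locking (Lemma \ref{thm 2}) all $w$-differences, hence all $\dot w_n$, are bounded; and by the exponential decay \eqref{exponentially decreasing} each $v_{mn}(t)$ decays like $e^{-\lambda\sin(\delta_0)t}$, so $\cosh(v_{mn})-1=O(v_{mn}^2)=O(e^{-2\lambda\sin(\delta_0)t})$ and thus $|R_n(t)|$ decays exponentially. Consequently $\int_0^\infty|\sum_nR_n\dot w_n|\,dt<\infty$. Since $\sum\omega_n=0$, phase locking also makes $-\sum_n\omega_n w_n$ bounded, so $H$ is bounded below; integrating the displayed identity gives
\[
\int_0^\infty\sum_{n=1}^N\dot w_n^2\,dt\le H(0)-\inf_t H(t)+\int_0^\infty\Bigl|\sum_{n=1}^N R_n\dot w_n\Bigr|\,dt<\infty.
\]
A final Barbalat step closes the argument: differentiating \eqref{main eq} and using boundedness of $\dot w_n$, $\dot v_{mn}$ and of the trigonometric/hyperbolic factors (again from phase locking) shows $\ddot w_n$ is bounded, so $t\mapsto\sum_n\dot w_n^2$ is uniformly continuous; an integrable, uniformly continuous nonnegative function tends to $0$, giving $\dot w_n\to0$. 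Combined with $|\dot{\mathbf v}_n-\dot{\mathbf v}_m|\to0$ this yields $|\dot q_n-\dot q_m|\to0$.

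I expect the main obstacle to be precisely the loss of the exact gradient structure caused by the $\cosh(v_{mn})$ coupling between the real and imaginary channels: unlike the real Kuramoto model, $H$ need not be monotone here. The argument survives only because Lemma \ref{p and f sync for Im} delivers \emph{exponential} (not merely asymptotic) synchronization of the imaginary parts, which is what renders the defect $\sum_nR_n\dot w_n$ absolutely integrable; a slower decay would break the integrability of $\int_0^\infty\sum_n\dot w_n^2\,dt$. A secondary technical point is the careful verification that $\ddot w_n$ stays bounded—in particular that the terms $\sin(w_m-w_n)\sinh(v_{mn})\dot v_{mn}$ remain controlled near $v_{mn}=0$—which is needed to legitimately invoke Barbalat's lemma.
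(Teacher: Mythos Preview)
Your proposal is correct and follows essentially the same strategy as the paper: reduce to the real channel via Lemma \ref{thm 2}, Theorem \ref{q p locking} and Lemma \ref{p and f sync for Im}; prove $\int_0^\infty\sum_n\dot w_n^2\,dt<\infty$ by an energy argument that hinges on the exponential decay of $v_{mn}$; then conclude $\dot w_n\to0$ by Barbalat using boundedness of $\ddot w_n$. The only difference is in the packaging of the energy computation. The paper substitutes the equation into $\mathcal H(t)=\int_0^t\sum_n\dot w_n^2$ and integrates by parts, which in effect uses the time-dependent potential $-\sum_n\omega_nw_n+\tfrac{\lambda}{N}\sum_{m<n}\cos(w_n-w_m)\cosh(v_{mn})$ with the $\dot v_{mn}$ contribution (their term $\mathcal{III}$) playing the role of the error; you instead use the classical Kuramoto potential $H$ without the $\cosh$ factor and isolate the deviation from gradient flow as the defect $R_n=\tfrac{\lambda}{N}\sum_m\sin(w_m-w_n)(\cosh(v_{mn})-1)$. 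Both error terms are integrable for exactly the same reason---the exponential decay \eqref{exponentially decreasing}---so the two computations are equivalent; your perturbed-gradient-flow formulation makes the mechanism (loss of gradient structure, rescued by exponential imaginary synchronization) a bit more explicit.
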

\begin{proof}
    Based on Lemma \ref{p and f sync for Im} and Theorem \ref{q p locking}, we know that the imaginary part, $\mathbf{v}$, achieves phase and frequency synchronization. It remains to show that the real part also arrives at frequency synchronization. We consider a Lyapunov-like energy function as the following:
    \begin{align} \label{H function}
        \mathcal{H}(t):= \int_0^t \sum_{n=1}^N \dot{w}^2_n(s) \;ds.
    \end{align}
    We notice that $\mathcal{H}(t)$ is an increasing function with respect to $t$. The results proved in Lemma \ref{p and f sync for Im} and Theorem \ref{q p locking} imply that $\dot{w}_n(t)$ and $\ddot{w}_n(t)$ are bounded, for each $n\in\{1,...,N\}$. Hence, $\mathcal{H}'(t)$ is uniformly continuous. If $\mathcal{H}(t)$ is bounded by a constant that is independent of $t\in\mathbb{R}^+$, then 
    \begin{align}
        \lim\limits_{t\rightarrow \infty}\sum_{n=1}^N \dot{w}^2_n(t)=0.
    \end{align}
By means of system equations \eqref{main eq}, we arrive at
\begin{align*}
    \mathcal{H}(t)=&\int_0^t \sum_{n=1}^N \dot{w}^2_n(s) ds\\
   =&\int_0^t \left(\sum_{n=1}^N \omega_n \dot{w}_n(s) +\dfrac{\lambda}{N} \sum_{n=1}^N\sum_{m=1}^N \sin(w_m(s)-w_n(s))\cosh(v_{mn}(s)) \dot{w}_n(s)\right)\;ds\\
   =&\sum_{n=1}^N \omega_n(w_n(t)-w_n(0))\\
    &-\dfrac{\lambda}{N}\sum_{1\leqslant m<n \leqslant N} \int_0^t \sin(w_n(s)-w_m(s))\cosh(v_{mn}(s))(\dot{w}_n(s)-\dot{w}_m(s))\; ds\\
    \leqslant&\left|\sum_{n=1}^N \omega_n(w_n(t)-w_n(0))\right|+\left|\sum_{1\leqslant m<n \leqslant N}(\cos(w_n(s)-w_m(s))\cosh(v_{mn}(s))\Big|_{s=0}^{s=t}\right|\\
    &+\left|\dfrac{\lambda}{N}\sum_{1\leqslant m<n \leqslant N} \int_0^t \cos(w_n(s)-w_m(s))\sinh(v_{mn}(s))(\dot{v}_{mn}(s)) \; ds\right|\\
    =:& ~\mathcal{I}(t)+\mathcal{II}(t)+\mathcal{III}(t).
\end{align*}
We want to demonstrate that $\limsup_{t\rightarrow \infty} \mathcal{I+II+III}$ is bounded by a constant. It is difficult to directly observe that $\limsup_{t\rightarrow \infty}\mathcal{II}(t)$ is bounded since cosine is bounded by $1$ and $\limsup_{t\rightarrow \infty} \cosh(v_{mn}(t))=\cosh(1)$ due to Lemma \ref{p and f sync for Im} and Theorem \ref{q p locking}. Recalling the zero mean assumption for natural frequencies in \eqref{zero mean omega}, then we can obtain the bounds for $\mathcal{I}(t)$ by
\begin{align*}
    \mathcal{I}(t)\leqslant \sum_{n=2}^N \omega_n\left|(w_n(t)-w_1(t))\right|+\sum_{n=2}^N \omega_n\left|(w_n(0)-w_1(0))\right|.
\end{align*}
Using Lemma \ref{thm 2}, it is obvious that $\limsup_{t\rightarrow \infty}\mathcal{I}(t)$ is bounded.

It remains to verify $\limsup_{t\rightarrow \infty} \mathcal{III}(t)$ is bounded. Recalling the definition of $v_{mn}$ in system equations \eqref{main eq}, Lemma \ref{p and f sync for Im} and Theorem \ref{q p locking} and the definition of $T_c$ in \eqref{Big T}, we observe that there exists a $L>0$ and $T>T_c>0$ such that 
\begin{align} \label{v_mm dot is bdd}
    \sup_{t>T}|\dot{v}_{mn}(t)|<L,
\end{align}
for all $(n,m)\in\{1,...,N\}^2$. Therefore, combining \eqref{v_mm dot is bdd} and exploiting Lemma \ref{p and f sync for Im} and Theorem \ref{q p locking}, we obtain
\begin{align}
    \limsup_{t\rightarrow \infty}\mathcal{III}(t)\leqslant & \dfrac{\lambda}{N} \sum_{1\leqslant m<n\leqslant N}\int_0^{T} \left|\sinh(v_{mn}(s))\dot{v}_{mn}(s)\right|\; ds\\ \label{integration of sinh}
    &+\dfrac{\lambda L}{N} \sum_{1\leqslant m<n\leqslant N}\int_{T}^\infty \sinh(\left|v_{mn}(s)\right|)\; ds.
\end{align}
Due to inequality \eqref{exponentially decreasing}, we realize that $v_{mn}(t)$ is decreasing to zero exponentially, for all pair $(m,n)\in\{1,...,N\}^2$. Therefore, this implies that the second integration \eqref{integration of sinh} is bounded, and hence $\limsup_{t \rightarrow \infty}\mathcal{III}(t)$ is bounded. 

To recapitulate, we have proben that $\mathcal{H}(t)$ is bounded by a constant which is independent of $t\in\mathbb{R}^+$, so the solution $q(t)$ of system equations \eqref{main eq} arrives at frequency synchronization. This completes the proof.
\end{proof}

\begin{theorem} [Phase synchronization with identical nature frequencies] \label{3.5}
    Let $\boldsymbol{\omega}\equiv \mathbf{0}$. Assume that the solution of $q(t)$ of system equations \eqref{main eq} satisfies initial condition $w_n(0)\in(0,\pi-\delta)$ for some $\delta>0$. Then the solution arrives at phase synchronization.
\end{theorem}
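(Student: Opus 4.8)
The plan is to reduce phase synchronization to two separate convergence statements — one for the imaginary parts and one for the real parts — and to combine them through the triangle inequality $|q_n-q_m|\le|w_n-w_m|+|\mathbf v_n-\mathbf v_m|$. Since $\boldsymbol\omega\equiv\mathbf 0$ forces $\lambda_c=0$, the hypothesis $\lambda\sin\delta>\lambda_c$ of Lemma \ref{thm 2} and Theorem \ref{q p locking} holds automatically for every $\lambda>0$ and every $0<\delta<\pi$. I would therefore first invoke Theorem \ref{q p locking} to obtain a time $T_c$ after which $\sup_{t>T_c}\max_{n,m}|w_n(t)-w_m(t)|<\pi/2-\delta_0$, and then Lemma \ref{p and f sync for Im} to conclude that each $v_{mn}(t)\to 0$ exponentially; this already settles the imaginary part, $\lim_{t\to\infty}|\mathbf v_n(t)-\mathbf v_m(t)|=0$.

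The remaining and main work is to upgrade the real-part phase-locking of Lemma \ref{thm 2} to genuine phase synchronization, i.e. to show the diameter $D(t):=\max_n w_n(t)-\min_n w_n(t)$ tends to zero. Because $\boldsymbol\omega\equiv\mathbf 0$, the real dynamics reduce to $\dot w_n=\frac{\lambda}{N}\sum_m\sin(w_m-w_n)\cosh(v_{mn})$. Using the upper Dini derivative at the (piecewise smooth) extremal indices, together with the fact, from Lemma \ref{thm 2}, that all pairwise differences stay below $\pi-\delta$, I would first check that $\dot w_{\max}\le 0\le\dot w_{\min}$, so $D$ is nonincreasing. To extract a quantitative rate I split each coupling weight as $\cosh(v_{mn})=1+(\cosh(v_{mn})-1)$ and apply the sum-to-product identity to the main ($\cosh\equiv1$) part, writing
\begin{align*}
\sin(w_l-w_{\max})-\sin(w_l-w_{\min})=-2\cos\!\Big(w_l-\tfrac{w_{\max}+w_{\min}}{2}\Big)\sin\tfrac{D}{2}.
\end{align*}
Since each $w_l$ lies between $w_{\min}$ and $w_{\max}$ and $D<\pi-\delta<\pi$, the cosine factor is at least $\cos(D/2)>0$, so summing over $l$ bounds this contribution above by $-\lambda\sin D$. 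Combined with the elementary estimate $\sin D\ge\frac{\sin\delta}{\pi-\delta}\,D$ valid on $(0,\pi-\delta]$, this produces the differential inequality $\dot D\le-\frac{\lambda\sin\delta}{\pi-\delta}\,D+E(t)$.

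Here $E(t)$ collects the error coming from the weights $\cosh(v_{mn})-1$; bounding $|\sin(\cdot)|\le1$ and $\cosh(v)-1\le\tfrac12 v^2\cosh(v)$ shows that $|E(t)|$ is controlled by $\lambda\cosh(1)\,(\max_{m,n}v_{mn}(t))^2$, which by Lemma \ref{p and f sync for Im} decays exponentially for $t>T_c$ (and is merely bounded on $[0,T_c]$ by full phase-locking). A Grönwall/comparison argument against $\dot D\le-c_1 D+c_2 e^{-c_3(t-T_c)}$ then forces $D(t)\to0$, giving $\lim_{t\to\infty}|w_n(t)-w_m(t)|=0$. Feeding this and the imaginary-part convergence into the triangle inequality completes the proof.

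The main obstacle is precisely the time-dependent coupling weights $\cosh(v_{mn})$: they destroy the clean Kuramoto structure of the real subsystem, and the diameter argument closes only because Lemma \ref{p and f sync for Im} drives these weights to $1$ fast enough that the forcing term $E(t)$ is exponentially small and hence negligible in the Grönwall estimate. A secondary technical point worth care is the non-smoothness of $D(t)$ at the times where the extremal oscillators switch, which I would handle throughout with the upper Dini derivative rather than the ordinary derivative.
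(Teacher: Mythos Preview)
Your argument is correct. It is, however, a genuinely different route from the paper's proof of Theorem~\ref{3.5}, and in fact a more explicit one. The paper first invokes Theorem~\ref{F sync} to obtain frequency synchronization of the real parts, and then appeals to the ``limit equation'' of the $w$-subsystem (i.e., the classical identical-frequency Kuramoto obtained by sending $\cosh(v_{mn})\to 1$), together with the $\pi/2$ phase bound from Theorem~\ref{q p locking}, to infer phase synchronization; the passage from the asymptotically autonomous system to the limit system is left implicit. You bypass Theorem~\ref{F sync} entirely and instead run a direct diameter estimate: the sum-to-product computation isolates the autonomous Kuramoto contraction $-\lambda\sin D$, the deviation $\cosh(v_{mn})-1$ is treated as an exponentially decaying forcing via Lemma~\ref{p and f sync for Im}, and a Gr\"onwall comparison closes the argument. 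This yields an explicit exponential rate for $|w_n-w_m|\to 0$ and makes the perturbation bookkeeping transparent, at the cost of a short extra computation; the paper's route is terser but leans on a limit-equation step that it does not spell out. Your use of the upper Dini derivative to handle the index-switching of $w_{\max},w_{\min}$ is the standard and correct device here.
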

\begin{proof}
    Using Lemma \ref{thm 2} and Lemma \ref{p and f sync for Im}, we know that all of the imaginary parts tend to zero exponentially in \eqref{exponentially decreasing}. Also, previous results showed in Theorem \ref{q p locking} and Theorem \ref{F sync} demonstrate that the difference of real parts are bounded by $\pi/2$ and the real part achieves frequency synchronization. Combining these results and considering the limit equation of the real part in system equations \eqref{main eq}, we obtain phase synchronization. This completes the proof.
\end{proof}

% \color{red}
% Add a remark about existence and uniqueness.
% \color{black}

\subsection{Numerical results}
\label{subsec:numerical}
In this subsection, we provide some numerical results to support our analytical conclusions. 

In Fig.~\ref{fig:strong_q} and Fig.~\ref{fig:strong_q_dot}, we demonstrate a case of $N=5$ oscillators exhibiting full phase-locking and frequency synchronization in the quaternionic Kuramoto model, predicted by Theorem \ref{q p locking} and Theorem \ref{F sync}, respectively. In particular, we choose the natural frequencies $\boldsymbol{\omega} = (0.66, 0.10, -0.29, -0.34, -0.12)
$ so that $\lambda_c=1$ by \eqref{def 5}. Also, we choose coupling strength $\lambda=1.1$ and $\delta=\pi/2$, so that $\lambda>\lambda_c/\sin(\delta)$, such that the coupling strength is strong. The initial conditions are chosen as follows. For the real part, we choose $\vec{w}(0) = (0.14, 1.52, 0.36, 0.96, 0.15)$, all in $(0,\pi-\delta)$, so that the premise for Theorem \ref{q p locking} is satisfied. For the imaginary parts, we choose  $\vec{x}(0)=(0.86, 0.21, 0.36, 1.22, 0.87)$, $\vec{y}(0)=(0.09, 1.51, 0.15, 0.05, 1.27)$, and $\vec{z}(0)=(0.69, 0.22, 0.04, 0.12, 1.35).$ 

In Fig.~\ref{fig:strong_q_omega_zero}, we demonstrate a case of $N=5$ oscillators with identical natural frequencies that exhibit phase synchronization, predicted by Theorem \ref{3.5}. In particular, the natural frequencies $\boldsymbol{\omega} = (0,0,0,0,0)$ by the assumption \eqref{zero mean omega}. The initial conditions are chosen the same as above.

\begin{figure}[H] 
\centering
\includegraphics[width=8cm]{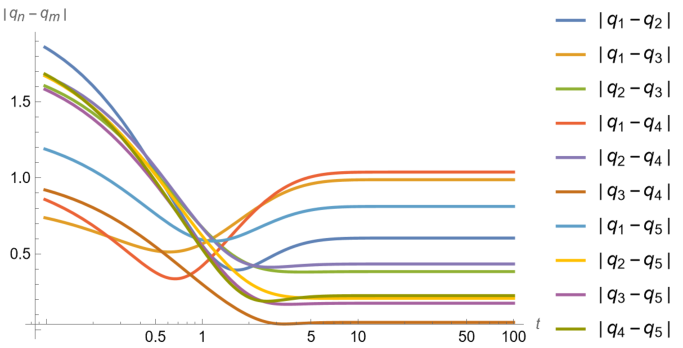}
\caption{This is the time evolution of $\{|q_n(t)-q_m(t)|\}_{1\leq m<n\leq N}$ for 5 oscillators ($N=5$) in the case of strong coupling strength. The parameters and initial conditions are given in the subsection \ref{subsec:numerical}. We can observe full phase-locking synchronization in this case, which is consistent with Theorem \ref{q p locking}.}

% \caption{This is the time evolution of $|q(t)|$ for 10 oscillators (N=10) in a case of strong coupling strength. We used $\boldsymbol{\omega} =
% ( 
% 0.448685, -0.163546, -0.393066, 0.288559, 0.101026, 0.126395, -0.547142, -0.447011, 0.133243, 0.452858
% )
% $ and $\lambda=1.1$, so that $\lambda_c=1<\lambda$. The initial conditions are $\boldsymbol{w}(0) = (0.14114, 1.46361, 0.252434, 0.0790315, 0.681269, 1.37375, 0.197022, 1.36382, 1.12434, 0.996979)$, $\boldsymbol{x}(0)=(0.619933, 0.591036, 0.152551, 0.33618, 1.24936, 1.41753, 1.11756, 0.421347, 1.39784, 0.0439544)$, $\boldsymbol{y}(0)=(0.659988, 0.122425, 0.801946, 0.761852, 0.15307, 1.28679, 0.440749, \
% 1.35329, 1.1663, 1.30254)$, and $\boldsymbol{z}(0)=(1.22092, 0.481986, 0.944653, 0.862571, 0.666, 1.36383, 0.452791, \
% 1.01313, 1.39028, 0.708601).$}
\label{fig:strong_q}
\end{figure}

\begin{figure}[H] 
\centering
\includegraphics[width=8cm]{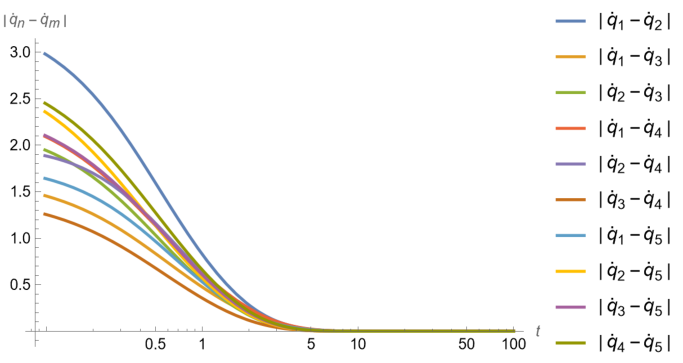}
\caption{This is the time evolution of $\{|\dot{q}_n(t)-\dot{q}_m(t)|\}_{1\leq m<n\leq N}$ for 5 oscillators ($N=5$) in the case of strong coupling strength. The parameters and initial conditions are given in the subsection \ref{subsec:numerical}. We can observe frequency synchronization in this case, which is consistent with Theorem \ref{F sync}.}
\label{fig:strong_q_dot}
\end{figure}

\begin{figure}[H] 
\centering
\includegraphics[width=8cm]{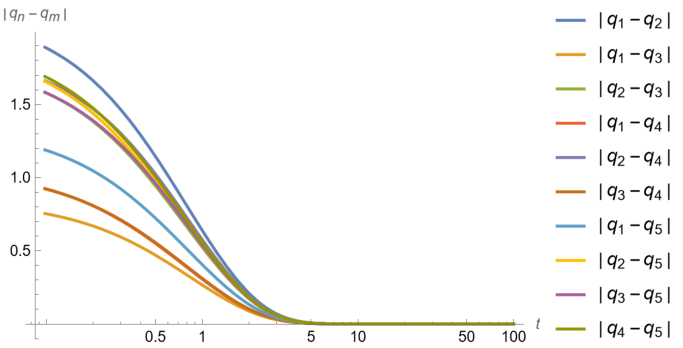}
\caption{This is the time evolution of $\{|q_n(t)-q_m(t)|\}_{1\leq m<n\leq N}$ for 5 oscillators ($N=5$) with identical natural frequencies ($\boldsymbol{\omega}=\boldsymbol{0}$) in the case of strong coupling strength. The parameters and initial conditions are given in the subsection \ref{subsec:numerical}. We can observe phase synchronization in this case, which is consistent with Theorem \ref{3.5}.}
\label{fig:strong_q_omega_zero}
\end{figure}

\section{Weak coupling strength} \label{sec 4}
\subsection{Two oscillators} \label{sec 4.1}
This subsection will analyze the phase-locking synchronization with the weak coupling strength. We first illustrate this with the two oscillators ($N=2$) system. Consider two oscillators with $\omega:=2\omega_1>0$ and the weak coupling strength $\lambda<\omega$. Define $q=(w,x,y,z):=q_1-q_2$. Recalling the definition of $v_{12}$ in \eqref{def of v_mn}, the system equations \eqref{main eq} may be written as
\begin{equation} \label{N=2}
\left\{ \begin{aligned}
    \dot{w}=& \;\omega-\lambda\sin(w)\cosh(v_{12}),\\
    \dot{v}_{12}=&~~~-\lambda\cos(w)\sinh(v_{12}).
\end{aligned}
\right.
\end{equation}
We pause to remark two things: firstly, it is impossible to arrive at phase-locking synchronization in the real $N=2$ Kuramoto model with a weak coupling ($\lambda<\omega$) because there are no fixed points. Secondly, this system can be reduced to the complexified Kuramoto model \cite{thumler2023synchrony} if we restrict $y_1\equiv y_2$ and $z_1 \equiv z_2$.

However, fixed points exist for the quaternionic Kuramoto model, such as $(w,v_{12})=(\pi/2,\cosh^{-1}(\omega/\lambda))$, where $\cosh^{-1}(x):=\ln\left(x+\sqrt{x^2-1}\right)$ on $1\leqslant x<\infty$. We prove this in the following theorem. To simplify the notation, let $v:=v_{12}$, $\gamma:=\omega/\lambda>1$, and $\alpha:=\cosh^{-1}(\omega/\lambda)>0$. The system of equations \eqref{N=2} can be rewritten as follows.

\begin{theorem} \label{weak N=2}
    Consider 
    \begin{equation} \label{n=2}
\left\{ \begin{aligned}
    \dot{w}=& \;\omega-\lambda\sin(w)\cosh(v),\\
    \dot{v}=&~~~-\lambda\cos(w)\sinh(v),
\end{aligned}
\right.
\end{equation}
    with weak coupling strength $\lambda<\omega$. Then, every equilibrium point $(w_k,v_k)=(2k\pi+\pi/2,\alpha)$, where $k\in\mathbb{Z}$ is Lyapunov stable but not asymptotically stable, and hence for any solution with initial condition close enough to these equilibria is a periodic orbit.
\end{theorem}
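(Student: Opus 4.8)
The plan is to show that the planar system \eqref{n=2} is completely integrable near its equilibria, so that each equilibrium is a genuine nonlinear center rather than a sink or a source. By the $2\pi$-periodicity of $\sin$ and $\cos$ in $w$ it suffices to treat $(w_0,v_0)=(\pi/2,\alpha)$, since every other equilibrium is its translate by $2k\pi$; moreover one checks that these are the only equilibria in the physical region $v>0$, because $\dot v=0$ forces $\cos w=0$ and then $\dot w=0$ forces $\cosh v=\omega/\lambda=\cosh\alpha$. As a first orientation I would linearize: the Jacobian of the right-hand side of \eqref{n=2} at $(\pi/2,\alpha)$ is $\left(\begin{smallmatrix} 0 & -\lambda\sinh\alpha \\ \lambda\sinh\alpha & 0\end{smallmatrix}\right)$, with eigenvalues $\pm i\lambda\sinh\alpha$. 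This identifies $(\pi/2,\alpha)$ as a linear center, but the linearization is inconclusive for a center — the nonlinear flow could still spiral inward or outward — so (non-)asymptotic stability cannot be read off from the spectrum. This is exactly what forces the integrability argument.

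The key step is to produce a conserved quantity. Writing the orbit equation as $(\omega-\lambda\sin w\cosh v)\,dv+\lambda\cos w\sinh v\,dw=0$ and seeking an integrating factor depending only on $v$ leads to $\mu(v)=\sinh^{-2}v$, after which the form becomes exact. Integrating yields the first integral
\begin{equation}\label{first-integral}
E(w,v)=\frac{\lambda\sin w-\omega\cosh v}{\sinh v},
\end{equation}
and a direct computation confirms $\dot E=E_w\dot w+E_v\dot v\equiv 0$ along solutions of \eqref{n=2} in the region $v>0$, where $E$ is smooth and where the equilibrium lives at $v=\alpha>0$. Discovering this integrating factor is the main obstacle; once \eqref{first-integral} is in hand the remaining steps are routine.

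Next I would verify that $(\pi/2,\alpha)$ is a nondegenerate strict local extremum of $E$. Using $\cosh\alpha=\omega/\lambda$ one checks $E_w=E_v=0$ there, while the Hessian is diagonal with $E_{ww}=E_{vv}=-\lambda/\sinh\alpha<0$ and $E_{wv}=0$, so $\mathrm{Hess}\,E=-(\lambda/\sinh\alpha)\,\mathrm{I}$ is negative definite and $E$ attains a strict local maximum at the equilibrium. Setting $V:=E(\pi/2,\alpha)-E$ then gives a function with $V(\pi/2,\alpha)=0$, $V>0$ on a punctured neighborhood, and $\dot V\equiv 0$: this is a Lyapunov function establishing Lyapunov stability. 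The same conservation forbids asymptotic stability, since a trajectory converging to the equilibrium would have to drive $V$ to $0$, contradicting that $V$ is constant and strictly positive off the equilibrium.

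Finally, to upgrade these trapped trajectories to genuine periodic orbits I would invoke the Morse lemma: because $\mathrm{Hess}\,E$ is nondegenerate, in suitable local coordinates $E(\pi/2,\alpha)-E$ is a positive-definite quadratic form, so for $c$ slightly below the maximum value the level set $\{E=c\}$ is a simple closed curve encircling the equilibrium. On such a curve the vector field has no zeros, as the only nearby equilibrium is the center itself; hence the compact invariant level curve carries no fixed point, and by the Poincaré–Bendixson theorem it must be a single periodic orbit. Consequently every solution starting close enough to $(2k\pi+\pi/2,\alpha)$ lies on one of these nested closed level curves and is periodic, which completes the argument.
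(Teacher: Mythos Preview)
Your argument is correct, and it is genuinely different from the paper's. You discover the explicit first integral $E(w,v)=(\lambda\sin w-\omega\cosh v)/\sinh v$, check that its Hessian at $(\pi/2,\alpha)$ is $-(\lambda/\sinh\alpha)\mathrm{I}$, and then read off Lyapunov stability, non-asymptotic stability, and the periodic-orbit structure directly from the level sets via the Morse lemma. The paper never finds this conserved quantity. Instead it works geometrically: it tracks trajectories through the four quadrants around the equilibrium, introduces a ``$\delta/n$ criterion'' (an infinite-buffer estimate) to rule out that orbits spiral into the fixed point along the nullcline, exploits the reflection symmetry $w\mapsto 2w_k-w$ to close each orbit, and uses the \emph{non-conserved} quadratic function $L_k=\tfrac12((w-w_k)^2+(v-\alpha)^2)$, whose time derivative has a definite sign only on half-strips (``deceleration/acceleration regions''), to build the nested family of closed contours and conclude Lyapunov stability. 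Your route is shorter and more conceptual---the integral pins down the center in one stroke---whereas the paper's machinery (the $\delta/n$ buffer, the symmetry lemma, the strip Lyapunov argument) is heavier but is designed to be reusable in settings where no first integral is available.
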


We also notice two things here: firstly, the equilibria formed in this system are not attractors, since the closed contours form a set of dense and closed ring contours near each of them, resembling a tree's growth rings. Secondly, the signs of $\dot{w}$ and $\dot{v}$ depend on the relative position of the equilibrium points.

Because of the possible interactions between the real frequency $w$ and the imaginary frequencies $x,y,z$ that comprise $v$, we cannot expect classical methods in the real Kuramoto model to work. Before further discussions, we conveniently separate the domain near each equilibrium point into four ``quadrants" in the $(w,v)$-plane as
    \begin{align} \label{Q1k}
        Q_{1k} &:= \left\lbrace (w,v) ~\bigg\vert~ w_k<w<w_k+\dfrac{\pi}{2}, ~ \alpha < v < \infty \right\rbrace ;\\ \label{Q2k}
        Q_{2k} &:= \left\lbrace (w,v) ~\bigg\vert~ w_k-\dfrac{\pi}{2} < w < w_k, ~ \alpha < v < \infty \right\rbrace; \\ \label{Q3k}
        Q_{3k} &:= \left\lbrace (w,v) ~\bigg\vert~ w_k-\dfrac{\pi}{2} < w < w_k, ~ 0 < v < \alpha \right\rbrace; \\ \label{Q4k}
        Q_{4k} &:= \left\lbrace (w,v) ~\bigg\vert~ w_k<w<w_k+\dfrac{\pi}{2}, ~ 0 < v < \alpha \right\rbrace.
    \end{align}

Now we define the lemmas that will lead us to the solutions of the contour trajectories around the fixed points. We will prove Theorem \ref{weak N=2} completely in \ref{4.1proof}.

\begin{lemma}  [$\delta/n$ criterion] \label{delta n criterion}
    Assume the solution $(w(t),v(t))$ of the system equations \eqref{n=2} satisfies the initial condition
    \begin{align} \label{n=2 initial}
        (w(0),v(0))\in Q_{2k}\bigcap \left\lbrace (w,v) ~\bigg\vert~\omega-\lambda \sin(w)\cosh(v)=0 \right\rbrace.
    \end{align}
    Then, there exists a finite time $T>0$ such that 
    \begin{align} \label{wT vT}
        (w(T),v(T))\in  \left\lbrace (w,v) ~\bigg\vert~w_k-\dfrac{\pi}{2}<w<w_k,v=\alpha \right\rbrace.
    \end{align}
\end{lemma}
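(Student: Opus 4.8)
The plan is to treat the passage of the trajectory through the upper--left quadrant $Q_{2k}$ defined in \eqref{Q2k} as a monotone descent in $v$, and to localize the exit on the bottom edge $\{v=\alpha\}$ by a boundary/trapping analysis combined with a finite--time estimate. Writing $w_k=2k\pi+\tfrac{\pi}{2}$, the first step is pure sign bookkeeping on $\partial Q_{2k}$ for $v>\alpha$. In the interior one has $\cos w>0$, $\sin w>0$, $\sinh v>0$, so \eqref{n=2} gives $\dot v=-\lambda\cos(w)\sinh(v)<0$; hence $v$ is strictly decreasing along the solution. On the right edge $\{w=w_k,\,v>\alpha\}$ we get $\dot w=\lambda(\gamma-\cosh v)<0$, so the flow points left, back into $Q_{2k}$; on the left edge $\{w=w_k-\tfrac{\pi}{2},\,v>\alpha\}$ we have $\sin w=0$, so $\dot w=\omega>0$ and the flow points right, again into $Q_{2k}$. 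Both vertical edges are therefore inward, while on the bottom edge $\{v=\alpha,\ w_k-\tfrac{\pi}{2}<w<w_k\}$ we have $\dot v=-\lambda\cos(w)\sinh(\alpha)<0$, so the flow crosses it transversally downward.

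Since the initial point lies on the nullcline $\{\dot w=0\}$ with $\dot v(0)<0$, the trajectory immediately moves into the interior of $Q_{2k}$, is trapped between the two vertical edges, and $v$ decreases monotonically toward $\alpha$. This reduces the whole lemma to a single claim: that $v$ reaches the level $\alpha$ in \emph{finite} time. Once that is established, the inward barriers force $w_k-\tfrac{\pi}{2}<w(T)<w_k$ at the crossing time $T$ (the strict upper bound holding because the corner $(w_k,\alpha)$ is an equilibrium of \eqref{n=2} and so cannot be reached in finite time), which is exactly the conclusion \eqref{wT vT}.

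The only obstruction to finite--time arrival is that very corner $(w_k,\alpha)$: there $\dot v$ vanishes, so if the orbit slid toward it along $w\uparrow w_k$ the crossing time $T=\int_{\alpha}^{v(0)}\frac{dv}{\lambda\cos(w(v))\,\sinh v}$ could diverge. This is where the buffer argument (the $\delta/n$ mechanism named in the statement) enters. In the shifted variables $u:=w_k-w$ and $s:=v-\alpha$, where $\cos w=\sin u$, the linearization at the corner is the harmonic rotation $\dot u=\lambda\sqrt{\gamma^2-1}\,s$ and $\dot s=-\lambda\sqrt{\gamma^2-1}\,u$, whose circular orbits sweep the region $\{u>0,\,s>0\}$ onto the positive $u$--axis $\{s=0\}$ in a quarter period. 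I would make this rigorous without presupposing that the center is genuine, by an invariant--wedge/buffer estimate: partition the descent into thin layers, and on a buffer of width $\delta/n$ in $u$ use the near--conserved energy $E:=u^{2}+s^{2}$, whose time derivative along \eqref{n=2} is of higher order and so controllably small, to show $u$ cannot lose more than a fixed fraction of $E^{1/2}$ per layer. Summing the layerwise bounds keeps $u$ above a positive constant throughout, so $\cos w=\sin u$ stays bounded away from $0$, the integral for $T$ converges, and $v$ meets $\alpha$ at a point with $u>0$.

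The main obstacle is precisely this finite--time/non--convergence step: ruling out that the orbit is asymptotic to the center--type equilibrium $(w_k,\alpha)$ and showing instead that it crosses $\{v=\alpha\}$ at positive distance from $w_k$. The rest---the sign analysis on the four edges, the monotonicity of $v$, and the placement of the exit point---is routine once the trapping is in place; the quantitative lower bound on $w_k-w$ delivered by the $\delta/n$ buffer together with the energy estimate is the crux that upgrades the qualitative picture into the finite time $T$ required by the statement.
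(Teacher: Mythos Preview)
Your trapping and sign analysis on $\partial Q_{2k}$, the monotone descent of $v$, and the reduction to a single finite-time crossing claim are all correct and match the paper's setup. You also correctly isolate the only obstruction: ruling out asymptotic approach to the corner equilibrium $(w_k,\alpha)$.

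Where you diverge from the paper---and where there is a real gap---is the mechanism for that crux step. The paper's ``$\delta/n$'' is not an energy/buffer device; it is a Riemann-sum estimate for a divergent integral. Once the orbit drops below the nullcline $\{\dot w=0\}$ it remains in the subregion $\tilde Q_{2k}:=Q_{2k}\cap\{\dot w\ge 0,\ w\ge w(0)\}$, where $\dot w>0$, so one may parametrize by $w$ and bound the total $v$-drop from below by $\int_{w(0)}^{w(0)+\delta}(-\dot v/\dot w)\,dw$. Partitioning $[w(0),w(0)+\delta]$ into $n$ pieces of width $\delta/n$ and using the crude bounds $\dot w\le \omega-\lambda\sin(\varpi_{l-1})\cosh\alpha$ and $|\dot v|\ge \lambda\cos(\varpi_l)\sinh\alpha$ on the $l$-th piece yields, after $n\to\infty$,
\[
v(0)-\alpha \;\ge\; \tanh(\alpha)\int_{w(0)}^{w(0)+\delta}\frac{\cos w}{1-\sin w}\,dw \;=\; \tanh(\alpha)\,\ln\frac{1-\sin w(0)}{1-\sin(w(0)+\delta)},
\]
which diverges as $\delta\to w_k-w(0)$. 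Since the left side is finite, the trajectory must exit through $\{v=\alpha\}$ at some $w<w_k$.

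Your proposed route via $E=u^{2}+s^{2}$ does not close as stated. Near the corner $\dot E$ is indeed higher order, but its sign is in fact \emph{negative} throughout the left half-strip (this is precisely the content of the paper's later Lemma~\ref{DA region}), so ``near-conservation'' alone cannot prevent $E\to 0$. To force $s=0$ before $u=0$ you would need an additional angular estimate---for instance that $\theta=\arctan(s/u)$ satisfies $\dot\theta\le -c<0$ uniformly in a punctured neighbourhood of the corner---which your layering sketch does not supply. Such an argument can be made rigorous and would constitute a legitimate alternative to the paper's integral estimate, but it is not the ``$\delta/n$ criterion'' as the paper uses the term.
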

\begin{proof}
Define 
\begin{align} \label{Q tilde}
    \tilde{Q}_{2k}:=Q_{2k}\bigcap \left\{ (w,v) ~\bigg\vert~ \omega-\lambda\sin(w)\cosh(v) \geqslant 0, ~w\geqslant w(0) \right\}.
\end{align}
We illuminate this in Fig.~\ref{fig:M2}, where the cyan curve represents $\omega-\lambda\sin(w)\cosh(v)=0$. We observe that the solution $(w(t),v(t))$ with the initial condition \eqref{n=2 initial} keeps staying at the right-hand side of the vertical line $w=w(0)$, since $\dot{w}\geqslant 0$. Also, the solution cannot cross the cyan curve $\omega-\lambda\sin(w)\cosh(v)=0$, since $\dot{w}=0$ and $\dot{v}<0$. 
If the conclusion \eqref{wT vT} is not true, then the solution $(w(t),v(t))$ will stay at the region $\tilde{Q}_{2k}$ for any finite time $t>0$. By Poincar{\'e}–Bendixson theorem, this implies the trajectory satisfies 
\begin{align} \label{Q2k condition}
    (w(t),v(t))\in \tilde{Q}_{2k} ~\mbox{when}~ 0<t<\infty,
\end{align}
and 
\begin{align} \label{limit behavior}
    \lim\limits_{t\rightarrow \infty} (w(t),v(t))=(w_k,\alpha).
\end{align}

On the other hand, consider a $\delta>0$, such that $w(0)+\delta<w_k.$ Let $P_n=(\varpi_0,\varpi_1,...,\varpi_n)$ be a partition of the closed interval $[w(0),w(0)+\delta]$ and $\varpi_l=w(0)+l\delta/n$ for each $l\in\{0,1,...,n\}$. Our goal here is to construct an infinite-layer ``$\delta/n$ buffer" to slow down the solution and avoid it approaching the equilibrium point $(w_k,\alpha)$. In the region $\tilde{Q}_{2k}\bigcap \{ \varpi_{l-1}<w<\varpi_{l}\}$, for $l=1,...n$, we notice that 
\begin{align} \label{dot w}
    \dot{w}=& \;\omega-\lambda\sin(w)\cosh(v) \leqslant \omega-\lambda\sin(\varpi_{l-1})\cosh(\alpha),
\end{align}
and
\begin{align} \label{dot v}
    |\dot{v}|=&\lambda\cos(w)\sinh(v) \geqslant \lambda \cos(\varpi_{l})\sinh(\alpha).
\end{align}
Because of \eqref{Q2k condition} and \eqref{limit behavior}, there exists a $\tilde{t}$ such that $w(\tilde{t})=\varpi_n$ and $v(\tilde{t})>\alpha$. Also, we obtain
\begin{align} \label{v0-alpha}
    v_0-\alpha > \int_0^{\tilde{t}} -\dot{v} \;dt=\int_{w(0)}^{w(\tilde{t})} -\dfrac{\dot{v}}{\dot{w}} \;dw=\sum_{l=0}^{n}\int_{\varpi_l}^{\varpi_{l+1}} -\dfrac{\dot{v}}{\dot{w}} \;dw.
\end{align}
By means of \eqref{dot w} and \eqref{dot v}, we arrive at 
\begin{align} \notag
    &\sum_{l=0}^{n}\int_{\varpi_l}^{\varpi_{l+1}} -\dfrac{\dot{v}}{\dot{w}} \;dw \\ \notag
    \geqslant &\sum_{l=0}^{n} \dfrac{\lambda \cos(\varpi_l)\sinh(\alpha)}{\omega-\lambda\sin(\varpi_{l-1})\cosh(\alpha)}\cdot\dfrac{\delta}{n}
    =
    \tanh(\alpha)
    \sum_{l=0}^{n} \dfrac{ \cos(\varpi_l)}{1-\sin(\varpi_{l-1})}\cdot\dfrac{\delta}{n}\\  \label{riemann sum}
    =&
    \tanh(\alpha)
    \left(
    \cos\left(\dfrac{\delta}{n}\right) \sum_{l=0}^{n} \dfrac{ \cos(\varpi_{l-1})}{1-\sin(\varpi_{l-1})}\cdot\dfrac{\delta}{n}-\sin\left(\dfrac{\delta}{n}\right) \sum_{l=0}^{n} \dfrac{ \sin(\varpi_{l-1})}{1-\sin(\varpi_{l-1})}\cdot\dfrac{\delta}{n}
    \right)
    .
\end{align}
We notice that inequality \eqref{riemann sum} holds for all $n\in \mathbb{N}$. Therefore, combining \eqref{v0-alpha} and \eqref{riemann sum} results in
\begin{align} \notag
    &v_0-\alpha \\ \notag
    \geqslant &\tanh(\alpha)\cdot
    \lim_{n\rightarrow \infty}\left\{ \cos\left(\dfrac{\delta}{n}\right) \sum_{l=0}^{n} \dfrac{ \cos(\varpi_{l-1})}{1-\sin(\varpi_{l-1})}\cdot\dfrac{\delta}{n}-\sin\left(\dfrac{\delta}{n}\right) \sum_{l=0}^{n} \dfrac{ \sin(\varpi_{l-1})}{1-\sin(\varpi_{l-1})}\cdot\dfrac{\delta}{n} \right\} \\ \notag
    =&
    \tanh(\alpha)\cdot
    \int_{w(0)}^{w(0)+\delta} \dfrac{\cos(w)}{1-\sin(w)} \;dw 
    = 
    \tanh(\alpha)\cdot
    \ln\left( \dfrac{1-\sin(w(0))}{1-\sin(w(0)+\delta)} \right),
\end{align}
which is bounded. This is a contradiction of
\begin{align}
    \ln\left( \dfrac{1-\sin(w(0))}{1-\sin(w(0)+\delta)} \right)\rightarrow \infty,
\end{align}
as $\delta\rightarrow w_k-w(0)$. This contradiction thus asserts that
there exists a finite time $T>0$ such that 
    \begin{align} 
        (w(T),v(T))\in  \left\lbrace (w,v) ~\bigg\vert~w_k-\frac{\pi}{2}<w<w_k,v=\alpha \right\rbrace.
    \end{align}
\end{proof}

Now we prove that the trajectory is symmetric about the vertical line $w=w_k$.

\begin{lemma}  [Symmetric property] \label{sym prop}
    Let $(w(t),v(t))$ be the solution of system equations \eqref{n=2} with the initial condition 
    \begin{align} \label{IC symmetric property}
         (w(0),v(0)) \in\bigcup_{i=1}^{4} cl (Q_{ik}).
    \end{align}
    Assume that there exists a $T>0$, such that $w(T)=w_k$ and $v(T)>0$, for some $k\in\mathbb{Z}$. Then the trajectory $(w(t),v(t))$ must exist until at least $t=2T$, and is hence symmetric about the vertical line $w=w_k$, when $0<t<2T$.  
\end{lemma}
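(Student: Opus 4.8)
The plan is to exploit a \emph{reflection symmetry} of the planar system \eqref{n=2} about the vertical line $w=w_k$, combined with a reversal of time about $t=T$. The algebraic fact that makes this work is that $w_k=2k\pi+\pi/2$, so $2w_k-w$ differs from $\pi-w$ by a multiple of $2\pi$; consequently $\sin(2w_k-w)=\sin(w)$ while $\cos(2w_k-w)=-\cos(w)$. Thus the reflection $w\mapsto 2w_k-w$ leaves $\dot w=\omega-\lambda\sin(w)\cosh(v)$ invariant but flips the sign of the factor $\cos(w)$ in $\dot v=-\lambda\cos(w)\sinh(v)$, which is exactly the sign change produced by reversing time. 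This is the observation that drives the whole proof.

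Concretely, I would define the candidate reflected trajectory
\begin{align*}
    \hat{w}(t):=2w_k-w(2T-t),\qquad \hat{v}(t):=v(2T-t),
\end{align*}
for $t\in[T,2T]$, and verify by the chain rule together with the two parity identities above that $(\hat w,\hat v)$ again satisfies \eqref{n=2}. At $t=T$ one has $\hat w(T)=2w_k-w(T)=w_k=w(T)$ and $\hat v(T)=v(T)$, so the reflected trajectory agrees with the original at $t=T$. Since the right-hand side of \eqref{n=2} is smooth in $(w,v)$ on all of $\mathbb{R}^2$ (the functions $\sin,\cos,\sinh,\cosh$ are entire), it is locally Lipschitz, so uniqueness of solutions applies; gluing $(w,v)$ on $[0,T]$ to $(\hat w,\hat v)$ on $[T,2T]$ produces a $C^1$ solution on $[0,2T]$, which furnishes the required continuation of the orbit up to $t=2T$. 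Note that because $(\hat w,\hat v)$ is written \emph{explicitly} in terms of the already-known values $w(2T-t),v(2T-t)$, existence on $[T,2T]$ is automatic and no separate finite-time blow-up analysis is needed.

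Writing the extended solution again as $(w,v)$ on $[0,2T]$, I would apply uniqueness to $(w,v)$ and to its reflection $t\mapsto(2w_k-w(2T-t),v(2T-t))$, both of which solve \eqref{n=2} on $[0,2T]$ and coincide at $t=T$. This yields the pointwise identities
\begin{align*}
    w(t)+w(2T-t)=2w_k,\qquad v(t)=v(2T-t),\qquad 0\leqslant t\leqslant 2T,
\end{align*}
which state precisely that the orbit is symmetric about $w=w_k$: the points at times $t$ and $2T-t$ sit at equal heights $v$ and at equal horizontal distances on opposite sides of $w_k$.

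The main obstacle — really the only nontrivial point — is recognizing the correct symmetry, namely that one must compose the spatial reflection $w\mapsto 2w_k-w$ with time reversal rather than use either alone; the parity computation $\cos(2w_k-w)=-\cos(w)$ is what forces the time reversal, and it is special to the equilibrium abscissa $w_k=2k\pi+\pi/2$. A secondary point to keep clean is the role of the hypothesis $v(T)>0$: the line $v=0$ is invariant for \eqref{n=2} (there $\dot v=0$), and $v$ represents the nonnegative quantity $v_{12}$, so $v(T)>0$ guarantees the trajectory lies strictly in the physical region $v>0$ and that the reflected heights $v(2T-t)$ stay positive, making the symmetric orbit genuinely two-sided rather than collapsing onto $v=0$. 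The assumption $(w(0),v(0))\in\bigcup_{i=1}^4 \mathrm{cl}(Q_{ik})$ serves only to fix which equilibrium's symmetry line $w=w_k$ is the relevant one.
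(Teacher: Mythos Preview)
Your proof is correct and follows essentially the same approach as the paper: both define the reflected trajectory via $w\mapsto 2w_k-w$ combined with time reversal about $T$, verify it satisfies \eqref{n=2} using the parity identities $\sin(2w_k-w)=\sin(w)$ and $\cos(2w_k-w)=-\cos(w)$, and then invoke uniqueness of solutions to conclude the symmetry identities $w(T-t)+w(T+t)=2w_k$, $v(T-t)=v(T+t)$. The only difference is cosmetic---the paper parametrizes the reflected piece on $[0,T]$ (as $W(t)=2w_k-w(T-t)$, $V(t)=v(T-t)$) and matches it to $w(t+T)$, whereas you parametrize directly on $[T,2T]$; your added remarks on why $v(T)>0$ matters and on the role of the initial-condition hypothesis are accurate commentary beyond what the paper spells out.
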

\begin{proof}
    Let
        \begin{align} \label{W}
            W(t) &:=2 w_k-w(T-t),\\ \label{V}
            V(t) &:= v(T-t),
        \end{align}
        for $0\leqslant t\leqslant T$. Notice that $W(0)=w_k=w(T)$ and $V(0)=v(T)$. Also,
        \begin{align*}
            \dot{W}(t)
            &=
            \dot{w}(T-t)
            =
            \omega - \lambda \sin(w(T-t)) \cosh(v(T-t))
            \\
            &=
            \omega - \lambda
            \sin(W(t)) \cosh(V(t));
            \\
            \dot{V}(t)
            &=
            -\dot{v}(T-t)
            =
            \lambda \cos(w(T-t)) \sinh(v(T-t))
            \\
            &=
            -\lambda \cos(W(t)) \sinh(V(t))
            ,
        \end{align*}
        such that $(W(t),V(t))$ also satisfies the original system equations \eqref{n=2}.\\
        \\
        Through the existence and uniqueness of the solution, we have  
        \begin{align*}
            W(t) &= w(t+T); \\
            V(t) &= v(t+T)
        \end{align*}
        for $t\in[0,T]$. We pause to remark that the dynamical system must continually evolve until at least $t=2T$. Otherwise, if there is a $t_0\in (T,2T)$ such that $w$ or $v$ goes to infinity by symmetry up to $t_0-\epsilon$, there is a contradiction through the continuity argument. Combining \eqref{W} and \eqref{V} implies that 
        \begin{align*}
            w(T-t) + w(T+t) &= 2 w_k,\\
            v(T-t) - v(T+t) &= 0,
        \end{align*}
        so the solution must be symmetric with respect to the vertical line $w=w_k$.
\end{proof}
\begin{lemma}  [Deceleration/Acceleration region]\label{DA region}
   Given $k\in\mathbb{Z}$, let a Lyapunov function be defined as follows.
    $$
    L_k(w,v)
    =
   \frac{1}{2}  \left(\left(w-w_k \right)^2+\left(v-\alpha\right)^2\right).
    $$ There exists an $\epsilon>0$, only depending on $\alpha$, such that for every $k\in\mathbb{Z}$, the following function
    \begin{align*}
         F_k(w,v)
        :=\dfrac{1}{\lambda}\dfrac{dL_k}{dt}=
        \left(w-w_k\right)(\gamma-\sin(w)\cosh(v))
        -
        (v-\alpha) \cos(w) \sinh(v)
    \end{align*}
    is negative on the set $S^-_k$ and is positive on the set $S^+_k$, where
    \begin{align*}
        S^-_k &:= \left\lbrace (w,v) ~\bigg\vert~ w_k-\dfrac{\pi}{2} < w < w_k, ~ (1-\epsilon)\alpha < v < (1+\epsilon)\alpha \right\rbrace ,\\
        S^+_k &:= \left\lbrace (w,v) ~\bigg\vert~ w_k<w<w_k+\dfrac{\pi}{2}, ~ (1-\epsilon)\alpha < v < (1+\epsilon)\alpha \right\rbrace .
    \end{align*}
\end{lemma}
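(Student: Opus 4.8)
The plan is to exploit the translation and reflection symmetries of $F_k$ to collapse the statement to a single one‑sided estimate, and then to see through the degeneracy at the equilibrium corner by a sign decomposition rather than by compactness.

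First I would reduce to $k=0$. Since the vector field in \eqref{n=2} is $2\pi$‑periodic in $w$ and $w_k=w_0+2k\pi$, the shift $w\mapsto w-2k\pi$ gives $F_k(w,v)=F_0(w-2k\pi,v)$ and carries $S^\pm_k$ onto $S^\pm_0$; so it suffices to treat $k=0$, where $w_0=\pi/2$. I then change variables $w=\pi/2+\theta$, using $\sin w=\cos\theta$ and $\cos w=-\sin\theta$, to obtain
\[
F_0=\theta\bigl(\gamma-\cos\theta\cosh v\bigr)+(v-\alpha)\sin\theta\sinh v .
\]
This is \emph{odd} in $\theta$, i.e.\ $F_0(-\theta,v)=-F_0(\theta,v)$, and $S^-_0$ is precisely the reflection of $S^+_0$ across $\theta=0$ (with the same $v$‑range). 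Hence the whole lemma reduces to proving $F_0>0$ on $S^+_0$, that is for $0<\theta<\pi/2$ and $v$ near $\alpha$; the statement on $S^-_0$ then follows for free.

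For the positivity, I would factor out $\theta>0$ by writing $F_0=\theta\,\Phi(\theta,v)$ with $\Phi=\gamma-\cos\theta\cosh v+(v-\alpha)\tfrac{\sin\theta}{\theta}\sinh v$, and then, using $\gamma=\cosh\alpha$, split $\Phi$ into three transparent pieces:
\[
\Phi=g(v)+\cosh v\,(1-\cos\theta)+(v-\alpha)\Bigl(\tfrac{\sin\theta}{\theta}-1\Bigr)\sinh v,
\]
where $g(v):=\cosh\alpha-\cosh v+(v-\alpha)\sinh v$. A one‑line derivative check $g'(v)=(v-\alpha)\cosh v$ shows $g$ has a global minimum $g(\alpha)=0$, so $g\ge 0$, while the middle term is strictly positive for $\theta\in(0,\pi/2)$. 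When $v\le\alpha$ the last term is a product of two nonpositive factors times $\sinh v>0$, hence nonnegative, and $\Phi>0$ is immediate. The only competing regime is $v>\alpha$, where the last term is negative; there I would dominate it by the middle term using the elementary bounds $1-\tfrac{\sin\theta}{\theta}\le\tfrac{\theta^2}{6}$ and $1-\cos\theta\ge\tfrac{2}{\pi^2}\theta^2$ on $[0,\pi/2]$, which reduce the required inequality to $(v-\alpha)\tanh v\le 12/\pi^2$. Choosing $\epsilon:=\min\{12/(\pi^2\alpha),\,1/2\}$, which depends only on $\alpha$, secures this on $((1-\epsilon)\alpha,(1+\epsilon)\alpha)$ and also keeps $v>0$.

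The main obstacle is exactly the corner $(\theta,v)=(0,\alpha)$: there $F_0$, $g$, and both $\theta$‑dependent pieces vanish simultaneously, so $F_0$ is \emph{not} bounded below by a positive constant on the closed region and a bare compactness argument cannot succeed. The decomposition above is designed to see through this degeneracy — a Taylor expansion shows the local behaviour $\Phi\approx\tfrac{\cosh\alpha}{2}\bigl(\theta^2+(v-\alpha)^2\bigr)$, which is positive definite — and the two Jordan‑type inequalities turn the heuristic ``the quadratic with positive leading coefficient wins'' into the clean, $\alpha$‑only threshold on $\epsilon$ stated above, completing the proof.
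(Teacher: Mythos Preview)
Your proof is correct and takes a genuinely different route from the paper's.

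The paper also reduces to $k=0$, but then stays in the $w$ coordinate and argues by bracketing: it computes the boundary values $F(0,v)$, $F(\pi/2,v)=0$, $F(\pi,v)$; then the boundary values of $F_w$ (including an auxiliary computation of $F_{wv}(\pi/2,v)$ to pin down the sign at $w=\pi/2$); finally it shows $F_{ww}$ has a definite sign on each half-strip. Monotonicity of $F_w$ plus its boundary values forces $F_w>0$ throughout, and then monotonicity of $F$ plus $F(\pi/2,v)=0$ gives the claimed sign. The $\epsilon$ is obtained non-constructively, as the minimum of two continuity thresholds for the auxiliary functions $h_1(v)=(v-\alpha)\sinh v$ and $h_2(v)=2\cosh v-(v-\alpha)\sinh v$ near $v=\alpha$.

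Your argument is tighter in two respects. First, the observation that $F_0$ is \emph{odd} in $\theta=w-\pi/2$ (which the paper does not exploit) immediately halves the work and makes the pairing of $S_0^-$ with $S_0^+$ automatic. Second, your three-term decomposition of $\Phi$ together with the Jordan-type bounds $1-\tfrac{\sin\theta}{\theta}\le\theta^2/6$ and $1-\cos\theta\ge 2\theta^2/\pi^2$ replaces the paper's second-derivative bookkeeping by a transparent sign discussion, and yields the \emph{explicit} choice $\epsilon=\min\{12/(\pi^2\alpha),1/2\}$ rather than an existential one. The paper's approach, in exchange, is more mechanical: once one decides to track $F$, $F_w$, $F_{ww}$ at the boundaries, no clever splitting of $\Phi$ is needed.
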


\begin{proof}
    Recall that $w_k=\frac{\pi}{2}+2k\pi$. Since both the function $F_k$ and the sets $S_k^-,~S_k^+$ are periodic $2\pi$, it suffices to focus on the case $k=0$. For brevity, we write $F:=F_0$.
    
    To determine the sign of $F$, we argue through the signs of its partial derivatives $F_w$ and $F_{ww}$. A summary of the signs of $F$, $F_w$ and $F_{ww}$ are given in Fig.~\ref{fig:M1}.
    
    First, we check that, for all $v$,
    \begin{align*}
        F(0,v)
        &=
        -\dfrac{\pi}{2}\gamma-(v-\alpha)\sinh(v)
        ,\\
        F\left(\dfrac{\pi}{2},v\right) 
        &= 0
        ,\\
        F\left(\pi,v\right) 
        ~&= 
        \dfrac{\pi}{2}\gamma+(v-\alpha)\sinh(v)
        .
    \end{align*}
    Since the function $h_1(v):=(v-\alpha)\sinh(v)$ is continuous and $h_1(\alpha)=0$, there exists an $\epsilon_1>0$ such that for \emph{all} $(1-\epsilon_1)\alpha < v < (1+\epsilon_1)\alpha$, we have $|h_1(v)| < \pi\gamma/2$. Therefore, whenever $(1-\epsilon_1)\alpha < v < (1+\epsilon_1)\alpha$, we have $F(0,v)<0$ and $F(\pi,v)>0$.
    
    Second, we look at the partial derivative $F_w$. A straightforward calculation yields that
    \begin{align*}
        F_w(w,v) &:= \dfrac{\partial}{\partial w} F(w,v)
        \\
        &=
        \gamma-\sin(w)\cosh(v)
        -
        \left(w-\dfrac{\pi}{2}\right) \cos(w)\cosh(v)
        +
        (v-\alpha) \sin(w) \sinh(v)
        .
    \end{align*}
    Hence, we have
    \begin{align*}
        F_w\left(0,v\right)
        ~&=
        F_w\left(\pi,v\right)
        =
        \gamma + \dfrac{\pi}{2} \cosh(v)
        \geqslant
        \gamma + \dfrac{\pi}{2}
        >
        0
        ,\\
        F_w\left(\dfrac{\pi}{2},v\right)
        &=
        \gamma - \cosh(v) + (v-\alpha) \sinh(v)
        .
    \end{align*}
    We notice that $F_w\left(\pi/2,\alpha\right)=0$. To determine the sign of $F_w\left(\pi/2,v\right)$ for $v$ close to $\alpha$, we consider the derivative of $F_w\left(\pi/2,v\right)$ w.r.t. $v$. Equivalently,
    \begin{align*}
        F_{wv}\left(\dfrac{\pi}{2},v\right)
        :=
        \dfrac{\partial^2}{\partial v\partial w} F(w,v) \bigg\vert_{w=\frac{\pi}{2}}
        =
        (v-\alpha) \cosh (v)
        .
    \end{align*}
    It is now clear that $F_{wv}(\pi/2,v)<0$ when $v<\alpha$ and that $F_{wv}(\pi/2,v)>0$ when $v>\alpha$. Hence, $F_{w}(\pi/2,v)$ is minimized at and only at $v=\alpha$. That is,
    \begin{align*}
        F_w\left(\dfrac{\pi}{2},v\right)
        \geqslant
        F_w\left(\dfrac{\pi}{2},\alpha\right)
        = 
        0
        .
    \end{align*}

    Third, we look at the second-order partial derivative $F_{ww}$. A straightforward calculation results in
    \begin{align}
        F_{ww}(w,v)
        &:=
        \dfrac{\partial^2}{\partial w^2} F(w,v)
        \nonumber\\
        &=
        -2\cos(w)\cosh(v)
        +
        \left(w-\dfrac{\pi}{2}\right) \sin(w)\cosh(v)
        +
        (v-\alpha) \cos(w) \sinh(v)
        \nonumber\\
        &=
        -\cos(w) \cdot
        \left(
        2 \cosh(v) - (v-\alpha) \sinh(v)
        \right)
        -
        \left(\dfrac{\pi}{2}-w\right) \sin(w)\cosh(v)
        .
        \label{eq:Fww}
    \end{align}
    Let us first restrict $w$ to be in the open interval $(0,\pi/2)$. Then, $\cos(w)>0$, $\pi/2-w>0$, $\sin(w)>0$ and $\cosh(v)>0$, so as long as $v$ allows the function $h_2(v):=2 \cosh(v) - (v-\alpha) \sinh(v)$ to be positive, then from \eqref{eq:Fww} we have $F_{ww}<0$. Since $h_2(\alpha)=2\gamma>0$, by continuity of $h_2$, there exists an $\epsilon_2>0$ such that $h_2(v)>0$ for \emph{all} $(1-\epsilon_2)\alpha<v<(1+\epsilon_2)\alpha$. 
    
    Now, let us restrict $w$ to be in the open interval $(\pi/2,\pi)$. Then, $\cos(w)<0$, $\pi/2-w<0$, $\sin(w)>0$ and $\cosh(v)>0$, so as long as $v$ allows $h_2(v)$ to be positive, from \eqref{eq:Fww} we have $F_{ww}>0$. Thus the same $\epsilon_2$ works.
    
    Let $\epsilon=\min\{\epsilon_1,\epsilon_2\}$. 
    Define two sets in the $(w,v)$-plane around the equilibrium point $(\pi/2,\alpha)$:
    \begin{align*}
        S^{-}_0 &:= \left\lbrace (w,v) ~\bigg\vert~ 0<w<\dfrac{\pi}{2}, ~ (1-\epsilon)\alpha < v < (1+\epsilon)\alpha \right\rbrace ;\\
        S^{+}_0 &:= \left\lbrace (w,v) ~\bigg\vert~ \dfrac{\pi}{2}<w<\pi, ~ (1-\epsilon)\alpha < v < (1+\epsilon)\alpha \right\rbrace .
    \end{align*}
    For reasons that will become clear in the following argument, we shall call $S^-_0$ the ``deceleration region" and $S^+_0$ the ``acceleration region" for convenience. We illuminate this in Fig.~\ref{fig:M1}, where the ``deceleration region" is the red region, and the ``acceleration region" is the green region.

    Fix any $v\in((1-\epsilon)\alpha,(1+\epsilon)\alpha)$, which defines a horizontal slice through the two regions. Since $F_{ww}<0$ on $S^-_0$, the one-variable function $F_w(w,v)$ is strictly decreasing. Because $F_w(0,v)>0$ and $F_w(\pi/2,v)\geqslant 0$, we must have
    \begin{align*}
        F_w(w,v) > 0 ~\text{for}~ w \in \left(0,\dfrac{\pi}{2}\right)
        .
    \end{align*}
    That is, the one-variable function $F(w,v)$ is strictly increasing. Since $F(0,v)<0$ and $F(\pi/2,v)=0$, we then have
    \begin{align*}
        F(w,v) < 0 ~\text{for}~ w \in \left(0,\dfrac{\pi}{2}\right)
        .
    \end{align*}
    Since $v$ was arbitrarily chosen in $((1-\epsilon)\alpha,(1+\epsilon)\alpha)$, we obtain that
    \begin{align*}
        F(w,v) < 0 ~\text{on}~ S^-_0
        .
    \end{align*}
    An analogous argument lets us conclude that
    \begin{align*}
        F(w,v) > 0 ~\text{on}~ S^+_0
        .
    \end{align*}
    
    \begin{figure}[h]
        \centering
    % draw
        \begin{tikzpicture}
            % draw rectangle
            \filldraw[color=green!25, fill=green!25, very thick] (0,-1) rectangle (2,1);
            \filldraw[color=red!25, fill=red!25, very thick] (-2,-1) rectangle (0,1);
            % % draw circle
            % \draw[color=cyan!100, ultra thick, -] (0,1) arc (90:270:1);
            % % draw ellipse
            % \draw[color=blue!100, ultra thick, -] (0,0.7) arc(90:270:0.25 and 0.45);
            % \draw[dashed, color=blue!100, ultra thick, -] (0,-0.2) arc(270:450:0.25 and 0.45);
            % generates the grid
            % \draw[step=1cm,gray,very thin] (-1.9,-1.9) grid (1.9,1.9);
            % generates the axes
            \draw[-] (-2.5,0) -- (2.5,0) node[anchor=west] {$v=\cosh^{-1}(\omega/\lambda)$};
            \draw[-] (-2.5,1) -- (2.5,1) node[anchor=west] {$v=(1+\epsilon)\cosh^{-1}(\omega/\lambda)$};
            \draw[-] (-2.5,-1) -- (2.5,-1) node[anchor=west] {$v=(1-\epsilon)\cosh^{-1}(\omega/\lambda)$};
            \draw[-,blue,thick] (0,-1.5) -- (0,-1/16) node[anchor=south,black] {$w=\pi/2$} -- (0,1.5);
            \draw[-,cyan,thick] (-2,-1.5) -- (-2,1/64) node[anchor=south,black] {$w=0$} -- (-2,1.5);
            \draw[-] (2,-1.5) -- (2,3/64) node[anchor=south,black] {$w=\pi$} -- (2,1.5);
            % label radius
            %\draw[thick] (1cm, 0pt) node{$\bullet$} -- (1cm, 0pt) node[anchor=north]{$1$};
            % label F
            \draw[->,color=blue,thick] (0,-2.25) node[color=blue,anchor=north]{$F\vert_{w=\pi/2}=0$} -- (0,-1.75);
            \draw[->,color=cyan,thick] (-2,-2.25) node[color=cyan,anchor=north]{$F\vert_{w=0}<0$} -- (-2,-1.75);
            \draw[->,color=cyan,thick] (2,-2.25) node[color=cyan,anchor=north]{$F\vert_{w=\pi}>0$} -- (2,-1.75);
            % label F_w
            \draw[->,color=blue,thick] (0,2.25) node[color=blue,anchor=south]{$F_w\vert_{w=\pi/2}\geq 0$} -- (0,1.75);
            \draw[->,color=cyan,thick] (-2,2.25) node[color=cyan,anchor=south]{$F_w\vert_{w=0}>0$} -- (-2,1.75);
            \draw[->,color=cyan,thick] (2,2.25) node[color=cyan,anchor=south]{$F_w\vert_{w=\pi}>0$} -- (2,1.75);
            % label F_ww
            \draw[color=purple!100,thick] (-1,0) node[color=red,anchor=north]{$F_{ww}<0$};
            \draw[color=teal!100,thick] (1,0) node[color=teal,anchor=north]{$F_{ww}>0$};
            % draw y
            % \draw[color=black,thick] (0,0.7) node{$\bullet$} node[anchor=west]{$(w(0),v(0))$};
            % % draw x^\star
            % \draw[color=blue,thick] (-1,1) node{$\bullet$} node[anchor=north west]{$\mathbf{x}^\star$};
            % \draw[thick] (-1.75,1.75) node[color=blue,anchor=north]{$\underset{\mathbf{x}\in\mathcal{B}_\infty}{\arg\max}~\langle \mathbf{x},\mathbf{y} \rangle$};
            % % label y1\neq 0, y2\neq 0 case
            % \draw[thick] (0,-1.75) node[anchor=north]{$y_1\neq 0,~y_2\neq 0$};
        \end{tikzpicture}
    \caption{This plot shows the ``deceleration strip" (red) and ``acceleration strip" (green) of the Lyapunov function $L$ in a neighborhood of the equilibrium point $(\pi/2,\cosh^{-1}(\omega/\lambda))$.} 
    \label{fig:M1} 
    \end{figure}
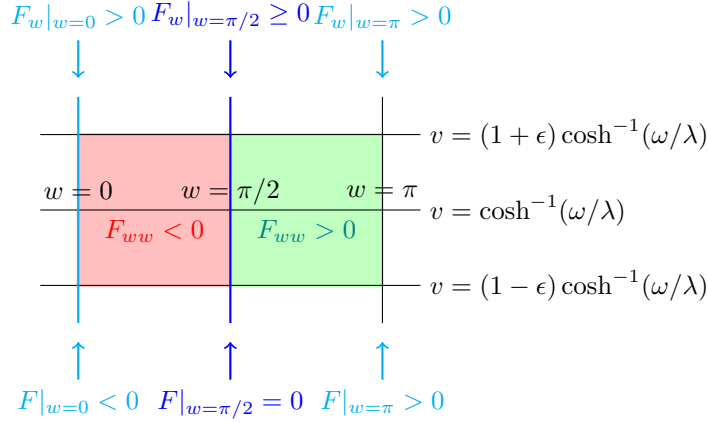
\end{proof}

Now we are ready to prove Theorem \ref{weak N=2}.

\begin{proof} \label{4.1proof}
    First, we focus on the equilibrium point $(w_0,v_0)=(\pi/2,\alpha)$, as the analysis for other equilibrium points $(w_k,v_k)$ are the same via periodicity.

   Consider the solution $(w(t),v(t))$ satisfying the system equations \eqref{n=2} with the initial condition 
    % \begin{align} \label{upper line}
    %      (w(0),v(0))\in\{ (w,v)| w=\pi/2, ~\cosh^{-1}(\omega/\lambda) < v < (1+\epsilon) \cosh^{-1}(\omega/\lambda) \},
    %      %. 
    % \end{align}
    \begin{align} \label{eq:init_cond}
         w(0) = \dfrac{\pi}{2}
         ,~
        v(0)>\alpha  . 
         %. 
    \end{align}
We observe that the trajectory $(w(t),v(t))$ must first enter the second quadrant $Q_{20}$ defined in \eqref{Q2k}, since $\dot{w}(0)<0$ and $\dot{v}(0)=0$, and hence fall in $\{ v<v(0)\}$ when $0<t\ll 1$, since $\dot{v}<0$ in $Q_{20}$.
We claim that the trajectory $(w(t),v(t))$ must hit the cyan curve, $\omega-\lambda\sin(w)\cosh(v)=0$, in Fig.~\ref{fig:M2}. To prove this, we suppose otherwise that
\begin{align}
    (w(0),v(0))\in \bar{Q}_{20}:=Q_{20}\bigcap \left\{ (w,v) ~\bigg\vert~ \omega-\lambda\sin(w)\cosh(v) \leqslant 0, ~v\leqslant v(0) \right\}.
\end{align}
By Poincar{\'e}–Bendixson theorem, either there exists a finite time $T>0$ such that $w(T)=\pi/2$ and $v(T)>\alpha$ or the asymptotic behaviour of the trajectory satisfies
\begin{align} \label{Q2k bar condition}
    (w(t),v(t))\in \bar{Q}_{20} ~\mbox{when}~ 0<t<\infty,
\end{align}
and 
\begin{align} \label{asymptotic behavior}
    \lim\limits_{t\rightarrow \infty} (w(t),v(t))=\left(\dfrac{\pi}{2},\alpha\right).
\end{align}
However, it is clear that the trajectory cannot land $\{ w=\frac{\pi}{2}, \alpha<v<v(0) \}\subset \bar{Q}_{20}$, since $\dot{w}<0$ in $\bar{Q}_{20}$. Because $\dot{w}<0$ in $\bar{Q}_{20}$, \eqref{Q2k bar condition} and \eqref{asymptotic behavior} are impossible to be true simultaneously. A contradiction then proves the claim. 

    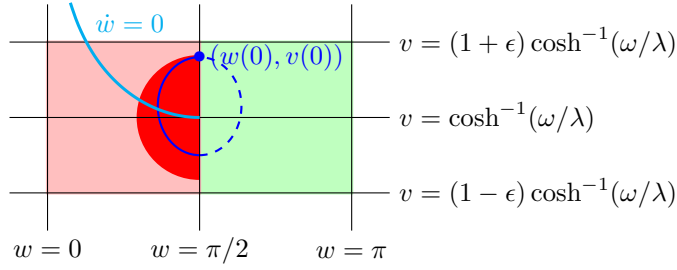
\begin{figure}\label{trajectory}
        \centering 
        %% draw
        \begin{tikzpicture}
            % draw rectangle
            \filldraw[color=green!25, fill=green!25, very thick] (0,-1) rectangle (2,1);
            \filldraw[color=red!25, fill=red!25, very thick] (-2,-1) rectangle (0,1);
            % % draw circle
            % \filldraw[color=red!100, ultra thick, opacity=0.4] (0,1) arc (90:270:1);
            % draw circle
            \filldraw[color=red!100, ultra thick, opacity=0.4] (0,0.8) arc (90:270:0.8);
            % draw ellipse
            \draw[color=blue!100, thick, -] (0,0.8) arc(90:270:0.55 and 0.65);
            \draw[dashed, color=blue!100, thick, -] (0,-0.5) arc(270:450:0.55 and 0.65);
            % generates the grid
            %\draw[step=1cm,gray,very thin] (-1.9,-1.9) grid (1.9,1.9);
            % generates the axes
            \draw[-] (-2.5,0) -- (2.5,0) node[anchor=west] {$v=\cosh^{-1}(\omega/\lambda)$};
            \draw[-] (-2.5,1) -- (2.5,1) node[anchor=west] {$v=(1+\epsilon)\cosh^{-1}(\omega/\lambda)$};
            \draw[-] (-2.5,-1) -- (2.5,-1) node[anchor=west] {$v=(1-\epsilon)\cosh^{-1}(\omega/\lambda)$};
            \draw[-] (-2,-1.5) node[anchor=north] {$w=0$} -- (-2,1.5) ;
            \draw[-] (0,-1.5) -- (0,-3/2+1/32) node[anchor=north] {$w=\pi/2$} -- (0,1.5) ;
            \draw[-] (2,-1.5) -- (2,1.5) ;
            \draw[-] (2,-3/2-3/32) node[anchor=north] {$w=\pi$} ;
            % label radius
            %\draw[thick] (1cm, 0pt) node{$\bullet$} -- (1cm, 0pt) node[anchor=north]{$1$};
            % % label B_\infty
            % \draw[thick] (1.5,1.5) node[color=red,anchor=north]{$\mathcal{B}_\infty$};
            % draw y
            \draw[color=blue,thick] (0,0.8) node{$\bullet$} node[anchor=west]{$(w(0),v(0))$};
            \draw[color=cyan,thick] (-1.5,1.25) node[anchor=west]{$\dot{w}=0$}; 
            % % draw x^\star
            % \draw[color=blue,thick] (-1,1) node{$\bullet$} node[anchor=north west]{$\mathbf{x}^\star$};
            % \draw[thick] (-1.75,1.75) node[color=blue,anchor=north]{$\underset{\mathbf{x}\in\mathcal{B}_\infty}{\arg\max}~\langle \mathbf{x},\mathbf{y} \rangle$};
            % % label y1\neq 0, y2\neq 0 case
            % \draw[thick] (0,-1.75) node[anchor=north]{$y_1\neq 0,~y_2\neq 0$};
            % draw "wdot=0" curve
            \draw[cyan, line width = 0.4mm]  plot[smooth,domain=-1.7:0] (\x,{arccosh(2/sin(45*(\x+2)))-arccosh(2)});
        \end{tikzpicture} 
        \caption{The trajectory (blue) with initial condition $(w(0),v(0))$ will hit the curve $\dot{w}=0$ (cyan), and then hit the vertical line $w=\pi/2$. Then by a symmetry argument, the trajectory will eventually return to $(w(0),v(0))$ and form a closed loop around the equilibrium point $(\pi/2,\cosh^{-1}(\omega/\lambda))$.} 
        \label{fig:M2} 
    \end{figure}

Applying the $\delta/n$ criterion Lemma \ref{delta n criterion}, we know that after crossing $\omega-\lambda\sin(w)\cosh(v)=0$ in Fig.~\ref{fig:M2}, the trajectory $(w(t),v(t))$ will stay at the region $\tilde{Q}_{20}$ defined in \eqref{Q tilde} until it hits the region 
\begin{align}  \label{70 line}
        \left\lbrace (w,v) ~\bigg\vert~0<w<\dfrac{\pi}{2},v=\alpha \right\rbrace.
\end{align}

When it enters this region, we notice that the trajectory then enters the third quadrant $Q_{30}$, since $\dot{v}<0$ in set \eqref{70 line}. In $Q_{30}$, $(w(t), v(t))$ cannot go back to the left boundary $w=0$ or swing to the upper boundary $v=\alpha$, since $\dot{w}>0$ and $\dot{v}<0$. Hence the trajectory either first enter $\{0<w\leqslant \pi/2,v=0\}$ or $\{w=\pi/2,0<v\leqslant\alpha\}$. However, using a symmetry argument from Lemma \ref{sym prop} around $w=\pi/2$, the trajectory cannot first touch $\{0<w\leqslant \pi/2,v=0\}$. This follows from uniqueness ($v\equiv 0$ is the solution) and the weak coupling case of real Kuramoto. In other words, 
\begin{align}
    \lim_{t\rightarrow \infty} w(t)=\infty,
\end{align}
if $w(t)$ satisfies the following equation:
\begin{align}
\dot{w}= \;\omega-\lambda\sin(w).
\end{align}
Therefore, the trajectory must pass through $\{w=\pi/2,0<v\leqslant\alpha\}$, but not at the fixed point $(\pi/2,\alpha)$ since $\dot{v}<0$ in the third quadrant $Q_{30}$. 

To recapitulate, we have shown that there exists a time $\tilde{T}>0$ such that $w(\tilde{T})=\frac{\pi}{2}$ and $0<v(\tilde{T})<\alpha$. Incorporating the initial condition \eqref{eq:init_cond} and using Lemma \ref{sym prop}, we prove that the trajectory $(w(t),v(t))$ is symmetric to the vertical line $w=\frac{\pi}{2}$, and hence form a closed contour. Since the system \eqref{n=2} is autonomous, the solution must be periodic with (smallest) period $2\tilde{T}$.

To sum up, every solution of the system of equations \eqref{n=2} with initial conditions \eqref{eq:init_cond} is a periodic orbit. To complete the proof of Theorem \ref{weak N=2}, we still have to check whether the equilibrium point $(\pi/2,\alpha)$ is Lyapunov stable but not asymptotically stable.

Recall Lemma \ref{DA region} and choose $\epsilon>0$. When the initial condition of the system of equations \eqref{n=2} $(w(0),v(0))$, satisfies \begin{align} 
         w(0) = \frac{\pi}{2}
         ,~
         \alpha < v(0) < (1+\epsilon) \alpha,
         %. 
    \end{align} we have the following 
    \begin{align*}
        \dot{w}(0) &= \omega - \lambda \cosh(v(0))
        < \omega - \lambda \cosh(\alpha) = 0
        ;\\
        \dot{v}(0) &= 0
        ,
    \end{align*}
    so the trajectory $(w(t),v(t))$ will enter $Q_{20}\subset S^-_0$ as soon as $0<t\ll 1$. 
    
    Then according to the Lyapunov analysis carried out above, we have $\mathrm{d}L/\mathrm{d}t<0$ on $S^-_0$, so the distance of $(w(t),v(t))$ to the equilibrium point $(\pi/2,\alpha)$ will strictly decrease as time progresses (until the trajectory leaves $S^-_0$). Equivalently, in the time interval during which the trajectory stays within $S^-_0$, the Lyapunov analysis ensures that the trajectory will be confined in the half disk $\{(w,v)\in S^-_0:~(w-\pi/2)^2+(v-\alpha)^2 \leqslant (v(0)-\alpha)^2 \}$, shown in Fig.~\ref{fig:M2} as the dark red region. 

    By Lemma \ref{DA region}, the deceleration region (green area in Fig. \ref{trajectory}) to the left of the fixed point implies that $\alpha-v(\tilde{T})<v(0)-\alpha$, which holds for every $(w(t),v(t))$ and satisfies the system with $v(0)>\alpha$. By taking a strictly decreasing sequence of $v^{(n)}_0\rightarrow \alpha^+$, one can show that the closed contours, which must be distinct by uniqueness, form a set of nested contours that are contained one within another and are dense inside the periodic orbit containing $(w(0)=\pi/2,v(0))$, resembling the contour produced by a tree's growth rings.
    
    To further clarify this argument, we can first choose another $v^{(1)}_0\in(\alpha,v(0))$ as the starting point. By the above argument, it will produce another periodic orbit.  Denote $u_0^{(1)}\in(0,\alpha)$ as the other $v$-coordinate such that this periodic orbit intersects with $w=\pi/2$ (see Fig.~\ref{fig:M3}). Now, choose any point between these two orbits. Because of the velocity field, $(\dot{w}, \dot{v})$, a trajectory starting from this point will also be between these two orbits, without intersecting them. Therefore, the trajectory must touch $w=\pi/2$ from the right at some finite time and will form a closed loop using Lemma \ref{sym prop}.
    
    If we were to continue in this manner \emph{ad infinitum}, that is, using a sequence of $v^{(n)}_0\in(\alpha,v^{(n-1)}_0)$ for $n=1,2,3,\cdots$ with $v^{(n)}_0 \searrow \alpha$, which generates correspondingly a sequence of $u^{(n)}_0\in(u^{(n-1)}_0,\alpha)$, it can be shown that except the equilibrium point, any point inside the original periodic orbit lies on another periodic orbit. In particular, the ``deceleration region" guaranteed by Lemma \ref{DA region} implies $0<\alpha-u^{(n)}_0<v^{(n)}_0-\alpha$, and thus $u^{(n)}_0 \nearrow \alpha$. This ensures that the open region $U^{(n)}$ enclosed by the periodic orbit passing through $(\pi/2,v^{(n)}_0)$ and $(\pi/2,u^{(n)}_0)$ satisfies $U^{(n)} \supset U^{(n+1)}$ and $\lim\limits_{n\to\infty} \mu(U^{(n)})=0$ where $\mu(\cdot)$ stands for Lebesgue measure. 
    %,which excludes the existence of a neighborhood near the equilibrium point
    %, such that starting from any point in that neighborhood will produce a trajectory ending at the equilibrium point.
    % that is not covered by the arguments above.

    This guarantees that the equilibrium point $(\pi/2,\alpha)$ is Lyapunov stable but not asymptotically stable. The same above argument may be applied for other equilibrium points $(w_k,v_k)$ by exploiting the periodicity. This completes the proof.
\end{proof}

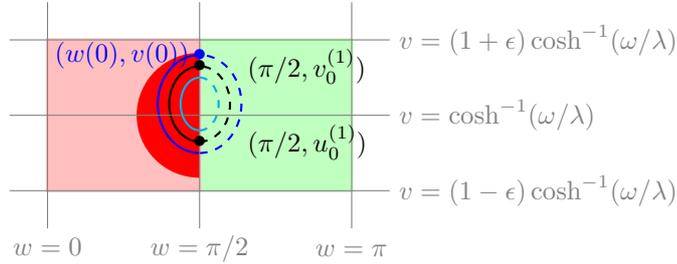
\begin{figure}[H]
\label{trajectory2}
        \centering 
        %% draw
        \begin{tikzpicture}
            % draw rectangle
            \filldraw[color=green!25, fill=green!25, very thick] (0,-1) rectangle (2,1);
            \filldraw[color=red!25, fill=red!25, very thick] (-2,-1) rectangle (0,1);
            % % draw circle
            % \filldraw[color=red!100, ultra thick, opacity=0.4] (0,1) arc (90:270:1);
            % draw circle
            \filldraw[color=red!100, ultra thick, opacity=0.4] (0,0.8) arc (90:270:0.8);
            % draw ellipse
            \draw[color=blue!100, thick, -] (0,0.8) arc(90:270:0.55 and 0.65);
            \draw[dashed, color=blue!100, thick, -] (0,-0.5) arc(270:450:0.55 and 0.65);
            % draw ellipse
            \draw[color=black!100, thick, -] (0,0.65) arc(90:270:0.4 and 0.5);
            \draw[dashed, color=black!100, thick, -] (0,-0.35) arc(270:450:0.4 and 0.5);
            % draw ellipse
            \draw[color=cyan!100, thick, -] (0,0.5) arc(90:270:0.25 and 0.35);
            \draw[dashed, color=cyan!100, thick, -] (0,-0.2) arc(270:450:0.25 and 0.35);
            % generates the grid
            %\draw[step=1cm,gray,very thin] (-1.9,-1.9) grid (1.9,1.9);
            % generates the axes
            \draw[-,color=gray] (-2.5,0) -- (2.5,0) node[anchor=west] {$v=\cosh^{-1}(\omega/\lambda)$};
            \draw[-,color=gray] (-2.5,1) -- (2.5,1) node[anchor=west] {$v=(1+\epsilon)\cosh^{-1}(\omega/\lambda)$};
            \draw[-,color=gray] (-2.5,-1) -- (2.5,-1) node[anchor=west] {$v=(1-\epsilon)\cosh^{-1}(\omega/\lambda)$};
            \draw[-,color=gray] (-2,-1.5) node[anchor=north] {$w=0$} -- (-2,1.5) ;
            \draw[-,color=gray] (0,-1.5) -- (0,-3/2+1/32) node[anchor=north] {$w=\pi/2$} -- (0,1.5) ;
            \draw[-,color=gray] (2,-1.5) -- (2,1.5) ;
            \draw[-,color=gray] (2,-3/2-3/32) node[anchor=north] {$w=\pi$} ;
            % label radius
            %\draw[thick] (1cm, 0pt) node{$\bullet$} -- (1cm, 0pt) node[anchor=north]{$1$};
            % % label B_\infty
            % \draw[thick] (1.5,1.5) node[color=red,anchor=north]{$\mathcal{B}_\infty$};
            % draw y
            \draw[color=blue,thick] (0,0.8) node{$\bullet$} node[anchor=east]{$(w(0),v(0))$};
            \draw[color=black,thick] (0,0.65) node{$\bullet$};
            \draw[color=black,thick] (0.5,0.65) node[anchor=west]{$(\pi/2,v^{(1)}_0)$};
            \draw[color=black,thick] (0,-0.35) node{$\bullet$};
            \draw[color=black,thick] (0.5,-0.35) node[anchor=west]{$(\pi/2,u^{(1)}_0)$};
            % \draw[color=cyan,thick] (-1.5,1.25) node[anchor=west]{$\dot{w}=0$}; 
            % % draw x^\star
            % \draw[color=blue,thick] (-1,1) node{$\bullet$} node[anchor=north west]{$\mathbf{x}^\star$};
            % \draw[thick] (-1.75,1.75) node[color=blue,anchor=north]{$\underset{\mathbf{x}\in\mathcal{B}_\infty}{\arg\max}~\langle \mathbf{x},\mathbf{y} \rangle$};
            % % label y1\neq 0, y2\neq 0 case
            % \draw[thick] (0,-1.75) node[anchor=north]{$y_1\neq 0,~y_2\neq 0$};
            % draw "wdot=0" curve
            % \draw[cyan, line width = 0.4mm]  plot[smooth,domain=-1.7:0] (\x,{arccosh(2/sin(45*(\x+2)))-arccosh(2)});
        \end{tikzpicture} 
        \caption{Inside the blue trajectory with initial condition $(w(0),v(0))$, the black trajectory with initial condition $(\pi/2,v_0^{(1)})$ also forms a closed loop via the same argument. Denote $(\pi/2,u_0^{(1)})$ as the other intersection point of the black trajectory with the line $w=\pi/2$. Similarly, the cyan trajectory with initial condition $(\pi/2,v_0^{(2)})$ forms a closed loop inside the black trajectory, and so on \emph{ad infinitum}.} 
        \label{fig:M3} 
\end{figure}

\begin{figure}[H] 
\centering
\includegraphics[width=12cm]{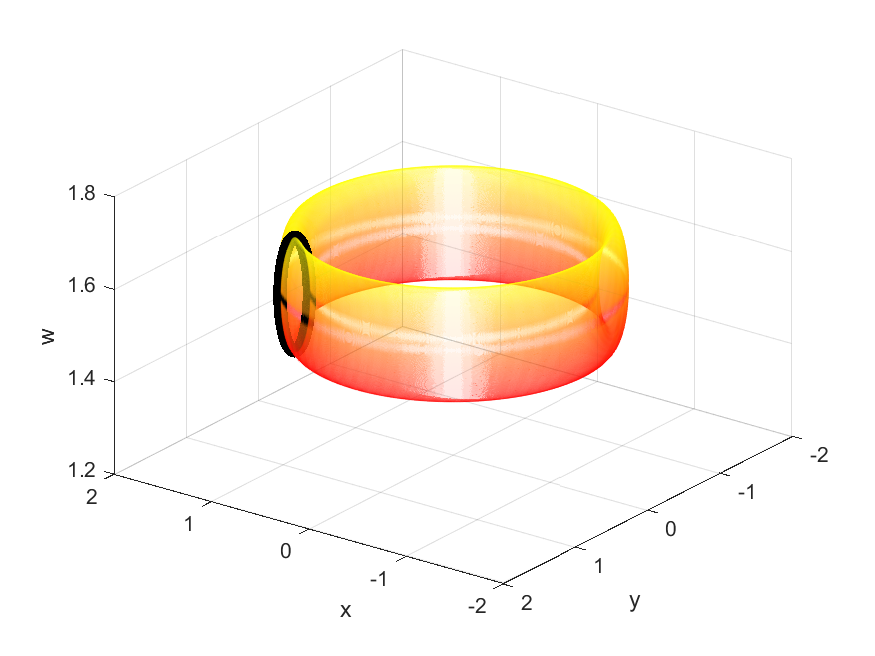}
\caption{The ``peach ring" surface is the collection of flows for the system equations \eqref{n=2}: $\{ \phi^t(w(0),x(0),y(0),z(0)): w(0)=\pi/2, |x(0)|^2+|y(0)|^2=2.1025, z(0)=0\}$ with the parameters $\omega = 2$ and $\lambda = 1$. The black ring lying on the surface is the solution of the system equations \eqref{n=2} with the same parameters and initial condition: $w(0)=\pi/2$, $x(0)=1$, $y(0)=1.05$, $z(0)=0$.}
\label{peachring}
\end{figure}

\begin{subsection}{Three oscillators} \label{sec 4.2}
    In this subsection, we demonstrate the case $N=3$. In particular, we analyze the so-called ``Lion Dance flow," that is, the case when assuming the following two conditions in \eqref{main eq} and \eqref{def of v_mn}:
    \begin{enumerate}
        \item $\omega_1-\omega_2=\omega_2-\omega_3>0$.
        \item $q_1(0)-q_2(0)=q_2(0)-q_3(0)$. That is,
            \begin{itemize}
                \item $w_1(0)-w_2(0)=w_2(0)-w_3(0)$.
                \item $x_1(0)-x_2(0)=x_2(0)-x_3(0)$.
                \item $y_1(0)-y_2(0)=y_2(0)-y_3(0)$.
                \item $z_1(0)-z_2(0)=z_2(0)-z_3(0)$.
            \end{itemize} 
    \end{enumerate}
    Notice that the set of possible solutions satisfying these assumptions constitutes an invariant manifold based on uniqueness. Because of these assumptions, we have that for all $t\geqslant 0$, $q_1(t)-q_2(t)=q_2(t)-q_3(t)$. Thus
    \begin{align*}
        v_{12}(t)=v_{23}(t);
        \quad
        v_{13}(t) = 2 v_{12}(t).
    \end{align*}
    %%% Start from here during/after lunch
    Denoting 
    \begin{align*}
        \omega
        :=
        \omega_1 - \omega_3
        =
        2 (\omega_1 - \omega_2)
        % >
        % \lambda
        ;\quad
        w(t)
        :=
        w_1(t) - w_2(t)
        ;\quad
        v(t)
        :=
        v_{12}(t)
    \end{align*}
     for brevity, the simplified system equations  \eqref{main eq} will simplify to the following
     \begin{equation} \label{n=3}
\left\{ \begin{aligned}
    \dot{w}=& \;
    \dfrac{\omega}{2}
    -
    \dfrac{\lambda}{3}(\sin(w)\cosh(v)+\sin(2w)\cosh(2v)),\\
    \dot{v}=&~~~~-\dfrac{\lambda}{3}(\cos(w)\sinh(v)+\cos(2w)\sinh(2v)).
\end{aligned}
\right.
\end{equation}
Notice that in this case, $\omega=\lambda_c$ where $\lambda_c$ is defined in \eqref{eq:lambda_c}. Now, we define three regimes of weak coupling:
\begin{enumerate}
    \item Super weak: $ \lambda < \Lambda_c:=3\omega/(2\max\limits_{x\in \mathbb{R}}(\sin(x)+\sin(2x)))
    \approx 0.85218915 \omega $. 
    \item Critically weak: $\lambda=\Lambda_c$.
    \item Weak: $ \Lambda_c < \lambda < \lambda_c = \omega $.
\end{enumerate}

\subsubsection{``Super weak" case $\lambda<\Lambda_c$:} \label{4.2.1}
It is highly nontrivial to observe that in this case the system of equations \eqref{n=3} possess two equilibria in each set:
\begin{align}  \label{75 set}
        R_k:=\left\lbrace (w,v) ~\bigg\vert~2k \pi \leqslant w< 2(k+1) \pi,~v>0\right\rbrace,
\end{align}
where $k\in \mathbb{Z}$. We claim that one of these equilibria is a sink $(w^k,v^k)$ and the other is a source $(\bar{w}^k,\bar{v}^k)$. Due to periodicity, we focus on the equilibrium point $(w^0,v^0)$ and $(\bar{w}^0,\bar{v}^0)$ in the set $R_0$. To analyze the stability of each equilibrium point, we need to locate the roots of the system of equations:
\begin{equation} \label{RHS n=3}
\left\{ \begin{aligned}
    \dfrac{\omega}{2}-\dfrac{\lambda}{3}(\sin(w)\cosh(v)+\sin(2w)\cosh(2v))=0,\\
    -\dfrac{\lambda}{3}(\cos(w)\sinh(v)+\cos(2w)\sinh(2v))=0.
\end{aligned}
\right.
\end{equation}

We observe that $v=0$ cannot be a solution of system \eqref{RHS n=3} when the coupling strength is super weak $\lambda<\Lambda_c$. Using hyperbolic function identities, the second equation in system equations \eqref{RHS n=3} yields
\begin{align} \label{77}
    \cos(w)+2\cos(2w)\cosh(v)=0.
\end{align}
By \eqref{77}, we obtain
\begin{align} \label{78}
    \cosh(v)=-\dfrac{\cos(w)}{2\cos(2w)},
\end{align}
which is well-defined, since $\cos(2w)$ cannot be zero.

Recalling the hyperbolic function identities for $\cosh(2v)$ and substituting equation system equations \eqref{78} into first equation in \eqref{RHS n=3}, we arrive at
\begin{align} 
    \dfrac{3\omega}{2\lambda}=\sin(w)\left(-\dfrac{\cos(w)}{2\cos(2w)}+2\cos(w)\left(\dfrac{\cos^2(w)}{2\cos^2(2w)}
    -1\right)\right).
\end{align}
We exploit the trigonometric identities and make a straightforward calculation to obtain
\begin{align} \label{80}
    4\sin^3(2w)+\dfrac{6\omega}{\lambda}\sin^2(2w)-3\sin(2w)-\dfrac{6\omega}{\lambda}=0.
\end{align}
Let the cubic polynomial having a root $\sin(2w)$ be
\begin{align}
    p(x)=4x^3+\dfrac{6\omega}{\lambda}x^2-3x-\dfrac{6\omega}{\lambda}.
\end{align}
We observe that $p(1)=1>0$ and $p(0)=-6\omega/\lambda<0$. This shows that there exists at least one root of $p(x)$ in $(0,1)$. Also, $p'(0)=-3<0$  and $p'(-\infty)>0$, so this implies that we cannot have three roots of $p(x)$ in $(0,1)$. 

Now we find that there exists $\tilde{w}_l\in (0,2\pi)$, $l=1,2,3,4$ such that $\tilde{w}_1<\tilde{w}_2<\tilde{w}_3<\tilde{w}_4$ and $\sin(2 \tilde{w}_l)$ satisfies equation \eqref{80}. However, $\tilde{w}_2$ and $\tilde{w}_4$ do not satisfy the system equations \eqref{RHS n=3}. In order to explain this, we make the following observations.\\

\begin{lemma} [Horizontal cutting lemma] \label{lemma 4.5}
    Consider a continuous function $f:\mathbb{R}^2\rightarrow \mathbb{R}$. Let $g_1: [y_1,y_2]\rightarrow \mathbb{R}$ and $g_2: [x_1,x_2]\rightarrow \mathbb{R}$ be two strictly monotone continuous functions such that $g_1(y_1)=g_2(x_1)$ and $g_1(y_2)=g_2(x_2)$. If $f(x_1,y_1)\cdot f(x_2,y_2)<0$, then the system of equations:
\begin{equation} \label{horizontal cutting lemma}
\left\{ \begin{aligned}
    f(x,y)&=0,\\
    g_2(x)-g_1(y)&=0,
\end{aligned}
\right.
\end{equation}
has at least one root $(\tilde{x},\tilde{y})$, and hence $x_1 < \tilde{x} < x_2$ and $y_1 < \tilde{y} < y_2$.
\end{lemma}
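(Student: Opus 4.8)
The plan is to collapse the two–dimensional root–finding problem in \eqref{horizontal cutting lemma} to a one–dimensional intermediate value argument: I use the constraint $g_2(x)-g_1(y)=0$ to parametrize a continuous curve joining the two corner points $(x_1,y_1)$ and $(x_2,y_2)$, and then track the sign of $f$ along that curve.

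First I would record that the matching conditions $g_1(y_1)=g_2(x_1)$ and $g_1(y_2)=g_2(x_2)$, together with $x_1<x_2$ and $y_1<y_2$, force $g_1$ and $g_2$ to share the same direction of monotonicity, since the sign of $g_2(x_2)-g_2(x_1)$ then equals the sign of $g_1(y_2)-g_1(y_1)$. Replacing $(g_1,g_2)$ by $(-g_1,-g_2)$ leaves the constraint unchanged, so I may assume without loss of generality that both are strictly increasing. Each is then a continuous bijection onto the common interval $[a,b]$, where $a:=g_1(y_1)=g_2(x_1)$ and $b:=g_1(y_2)=g_2(x_2)$ with $a<b$, and both inverses $g_1^{-1},g_2^{-1}:[a,b]\to\mathbb{R}$ are continuous and strictly increasing (the inverse of a continuous strictly monotone map on a compact interval is automatically continuous).

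Next I would define the parametrized curve
$$\gamma(s):=\bigl(g_2^{-1}(s),\,g_1^{-1}(s)\bigr),\qquad s\in[a,b],$$
which is continuous, satisfies $\gamma(a)=(x_1,y_1)$ and $\gamma(b)=(x_2,y_2)$, and by construction obeys $g_2\bigl(g_2^{-1}(s)\bigr)-g_1\bigl(g_1^{-1}(s)\bigr)=s-s=0$, so every point of $\gamma$ solves the second equation of \eqref{horizontal cutting lemma}. I would then set $\Phi(s):=f(\gamma(s))$, a continuous real–valued function on $[a,b]$ with $\Phi(a)=f(x_1,y_1)$ and $\Phi(b)=f(x_2,y_2)$. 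The hypothesis $f(x_1,y_1)\cdot f(x_2,y_2)<0$ says these endpoint values have opposite signs, so the intermediate value theorem produces some $\tilde s\in(a,b)$ with $\Phi(\tilde s)=0$. Setting $(\tilde x,\tilde y):=\gamma(\tilde s)=\bigl(g_2^{-1}(\tilde s),g_1^{-1}(\tilde s)\bigr)$ then gives simultaneously $f(\tilde x,\tilde y)=0$ and $g_2(\tilde x)-g_1(\tilde y)=0$, the desired root.

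Finally I would verify the strict inequalities: since $\tilde s$ lies strictly inside $(a,b)$ and both $g_1^{-1},g_2^{-1}$ are strictly increasing, we obtain $x_1=g_2^{-1}(a)<\tilde x<g_2^{-1}(b)=x_2$ and $y_1=g_1^{-1}(a)<\tilde y<g_1^{-1}(b)=y_2$. I do not expect any single step to be a genuine obstacle; the only point needing care is the bookkeeping that turns the constraint into a legitimate continuous path between the two corners, which is precisely where the strict monotonicity of $g_1,g_2$ and the endpoint–matching conditions are used.
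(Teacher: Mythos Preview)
Your proposal is correct and follows essentially the same approach as the paper: both reduce \eqref{horizontal cutting lemma} to a one–dimensional intermediate value argument along the curve $g_2(x)=g_1(y)$, the only cosmetic difference being that the paper parametrizes this curve by $x$ via $G(x):=g_1^{-1}\!\circ g_2(x)$ and applies the IVT to $x\mapsto f(x,G(x))$, whereas you parametrize by the common value $s=g_1(y)=g_2(x)$. Your write-up is in fact more careful than the paper's, explicitly handling the shared direction of monotonicity, the continuity of the inverses, and the strictness of the final inequalities.
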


\begin{proof}
Let the continuous function $G$ defined by $G(x)=g^{-1}_1 g_2(x)$. Clearly, because of intermediate value theorem, $G: [x_1,x_2]\rightarrow [y_1,y_2]$ is well-defined. Therefore, we obtain $f(x_1,G(x_1))\cdot f(x_2,G(x_2))<0$. Using the intermediate value theorem again, we complete the proof.
\end{proof}

Let
\begin{align} \label{f function}
    f(w,v)
    :=
    \dfrac{3\omega}{2\lambda}
    -(\sin(w)\cosh(v)+\sin(2w)\cosh(2v))
\end{align}
be defined on $(w,v)\in\mathbb{R}^2,$ 
\begin{align} \label{g1}
    g_1(v):=2\cosh(v)
\end{align} 
be defined on $\mathbb{R}^+$ and
\begin{align} \label{g2}
    g_2(w):=-\dfrac{\cos(w)}{\cos(2w)}
\end{align}
be defined on the domain 
\begin{align}
    \{w\in[0,2\pi):-\dfrac{\cos(w)}{\cos(2w)}\geqslant 2\}
    =
    \left(\dfrac{\pi}{4},r_1\right]
    \bigcup 
    \left(\dfrac{3\pi}{4},r_2\right] \bigcup 
    \left[r_3,\dfrac{5\pi}{4}\right) \bigcup 
    \left[r_4,\dfrac{7\pi}{4}\right),
\end{align}
where $r_l$, $l=1,2,3,4$ satisfies $g_2(r_l)=g_1(0)=2$ and $\pi/4<r_1<3\pi/4<r_2<\pi<r_3<5\pi/4<r_4<7\pi/4$ (see Fig.~\ref{fig:M4}).
\begin{figure} 
        \centering 
        %% draw
        \begin{tikzpicture}
            %generates the grid
            \draw[step=1cm,gray,very thin] (0,-4) grid (6.28,4);
            %generates the coordinates
            \draw[black] (0,1/16) -- (0,0) node{$\bullet$} node[anchor=north]{$0$};
            \draw[black] (3.14,1/16) --  (3.14,0) node{$\bullet$} node[anchor=north]{$\pi$};
            \draw[black] (6.28,1/16) --  (6.28,0) node{$\bullet$} node[anchor=north]{$2\pi$};
            % generates the axes
            \draw[->] (0,0) -- (6.78,0) node[anchor=west] {$w$};
            \draw[->] (0,-4) -- (0,4);
            % draw the functions
            \draw[red, line width = 0.4mm]  plot[smooth,domain=0:0.68775] (\x,{-cos(deg(\x))/cos(deg(2*\x)) });
            \draw[dashed, red, line width = 0.2mm] (pi/4,-4) node[anchor=north]{$\frac{\pi}{4}$} -- (pi/4,4) ;
            \draw[red, line width = 0.4mm]  plot[smooth,domain=0.8675:2.275] (\x,{-cos(deg(\x))/cos(deg(2*\x)) });
            \draw[dashed, red, line width = 0.2mm] (3*pi/4,-4) node[anchor=north]{$\frac{3\pi}{4}$} -- (3*pi/4,4) ;
            \draw[red, line width = 0.4mm]  plot[smooth,domain=2.4545:3.83] (\x,{-cos(deg(\x))/cos(deg(2*\x)) });
            \draw[dashed, red, line width = 0.2mm] (5*pi/4,-4) node[anchor=north]{$\frac{5\pi}{4}$} -- (5*pi/4,4) ;
            \draw[red, line width = 0.4mm]
            plot[smooth,domain=4.0082:5.4165] (\x,{-cos(deg(\x))/cos(deg(2*\x)) });
            \draw[dashed, red, line width = 0.2mm] (7*pi/4,-4) node[anchor=north]{$\frac{7\pi}{4}$} -- (7*pi/4,4) ;
            \draw[red, line width = 0.4mm]
            plot[smooth,domain=5.595:6.28] (\x,{-cos(deg(\x))/cos(deg(2*\x)) });
            \draw[red] (3.14,4) node[anchor=south] {$g_2(w):=-\dfrac{\cos(w)}{\cos(2w)}$};
            %generates the grid
            \draw[step=1cm,gray,very thin] (8,-4) grid (14,4);
            % generates the axes
            \draw[->] (8,0) -- (14,0) node[anchor=west] {$v$};
            \draw[->] (11,-4) -- (11,4);
            %generates the coordinates
            \draw[black] (11,1/16) -- (11,0) node{$\bullet$} node[anchor=north]{$0$};
            % draw the functions
            \draw[cyan, line width = 0.4mm]
            plot[smooth,domain=9.69:12.32] (\x,{2*cosh(\x-11)});
            \draw[black,-] (11,2);
            \draw[cyan] (11,4.25) node[anchor=south] {$g_1(v):=2\cosh(v)$};
            %% horizontal cutting
            \draw[blue, line width = 0.4mm] (0,2) node[anchor=east] {$2$} -- (14,2) node[anchor=west] {$2$};
            \draw[blue,dashed] (0.9359,2) node{$\bullet$} -- (0.9359,0) node{$\bullet$} node[anchor=north]{$r_1$} ;
            \draw[blue,dashed] (2.5737,2) node{$\bullet$} -- (2.5737,0) node{$\bullet$} node[anchor=north]{$r_2$} ;
            \draw[blue,dashed] (3.7094,2) node{$\bullet$} -- (3.7094,0) node{$\bullet$} node[anchor=north]{$r_3$} ;
            \draw[blue,dashed] (5.3473,2) node{$\bullet$} -- (5.3473,0) node{$\bullet$} node[anchor=north]{$r_4$} ;
            \draw[black] (11,2+1/16) -- (11,2) node{$\bullet$};
        \end{tikzpicture} 
        \caption{These two plots are illustrations of Lemma~\ref{lemma 4.5} for the specific functions $g_1(v)$ (right) and $g_2(w)$ (left). The indigo line shows that for $l=1,2,3,4,$ $g_2(r_l)=g_1(0)=2$.} 
        \label{fig:M4} 
    \end{figure}
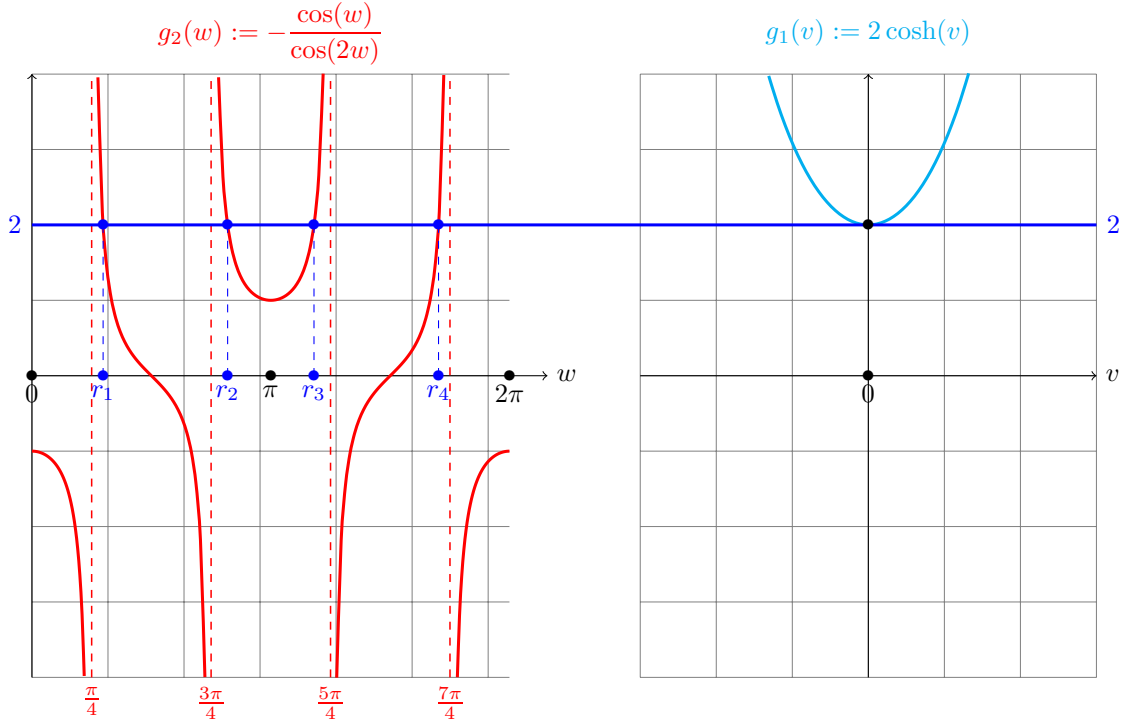
A straightforward calculation reveals that 
\begin{align}
    -\dfrac{7}{8}<\cos(r_l)=\dfrac{-1-\sqrt{33}}{8}<-\dfrac{3}{4}, ~\mbox{for}~l=2,3,
\end{align}
and
\begin{align}
    \dfrac{1}{2}<\cos(r_l)=\frac{-1+\sqrt{33}}{8}<\dfrac{5}{8}, ~\mbox{for}~l=1,4.
\end{align}
Clearly, $\sin(2w)<0$, when $w\in (\frac{3\pi}{4},r_2]\bigcup [r_4,\frac{7\pi}{4})$, which is a contradiction to $\sin(2w)\in (0,1)$ in \eqref{80}. By a direct calculation, we have
\begin{align} \label{87}
    f(r_1,0)&=\dfrac{3\omega}{2\lambda}-\sin(r_1)-\sin(2r_1)
    =
    \dfrac{3\omega}{2\lambda}
    - \max\limits_{x\in\mathbb{R}} (\sin(x)+\sin(2x))
    ,\\ \label{88}
    f\left(\left(\dfrac{\pi}{4}\right)^+,\infty\right)&=-\infty.
\end{align}
The second equality in \eqref{87} can be seen via Fig.~\ref{fig:M5}, noting that $g_2(w)=2$ if and only if $h'(w)=0$, where $h(w):=\sin(w)+\sin(2w)$. In the ``super weak" regime, from \eqref{87} we have $f(r_1,0)>0$. Hence by means of \eqref{87}, \eqref{88} and Lemma \ref{lemma 4.5}, we prove that 
there exits $\tilde{w}_1\in(\pi/4,r_1)$, $\tilde{v}_1>0$ such that $(\tilde{w}_1,\tilde{v}_1)$ is a root for system of equations \eqref{RHS n=3}.

Similarly, we have
\begin{align} \label{89}
    f(r_3,0)&=\dfrac{3\omega}{2\lambda}-\sin(r_3)-\sin(2r_3)
    \approx
    \dfrac{3\omega}{2\lambda}
    - 0.369
    > 0 ,\\ \label{90}
    f\left(\left(\dfrac{5\pi}{4}\right)^-,\infty\right)&=-\infty.
\end{align}
Notice that via \eqref{89}, in both the ``super weak" and the ``weak" regimes, we have $f(r_3,0)>0$. Combining \eqref{89}, \eqref{90} and Lemma \ref{lemma 4.5}, it is immediate that there exists $\tilde{w}_3\in(r_3,5\pi/4)$, $\tilde{v}_3>0$ such that $(\tilde{w}_3,\tilde{v}_3)$ is a root for system of system equations \eqref{RHS n=3}. To recapitulate, we proved that system equations \eqref{n=3} only have two equilibrium points in $R_0$, and hence one of them is in $(\pi/4,r_1)$ and the other is in $(r_3,5\pi/4)$.

Linearizing system equations \eqref{n=3} about $(\tilde{w}_l,\tilde{v}_l)$, $l=1,3$, we arrive at
\begin{align}
\begin{pmatrix} \dot{w} \\ \dot{v}  \end{pmatrix}
=\begin{pmatrix} 
L_{11}(\tilde{w}_l,\tilde{v}_l) ~~L_{12}(\tilde{w}_l,\tilde{v}_l)\\ L_{21}(\tilde{w}_l,\tilde{v}_l)~~L_{22}(\tilde{w}_l,\tilde{v}_l)
\end{pmatrix}
\begin{pmatrix} w \\ v  \end{pmatrix}:=L(\tilde{w}_l,\tilde{v}_l)\begin{pmatrix} w \\ v  \end{pmatrix},
\end{align}
where
\begin{align}
    L_{11}(\tilde{w}_l,\tilde{v}_l)=L_{22}(\tilde{w}_l,\tilde{v}_l)&=-\dfrac{\lambda}{3}\left( \cos(\tilde{w}_l)\cosh(\tilde{v}_l)+2\cos(2\tilde{w}_l)\cosh(2\tilde{v}_l)\right),\\
    -L_{21}(\tilde{w}_l,\tilde{v}_l)=L_{12}(\tilde{w}_l,\tilde{v}_l)&=-\dfrac{\lambda}{3}\left( \sin(\tilde{w}_l)\sinh(\tilde{v}_l)+2\sin(2\tilde{w}_l)\sinh(2\tilde{v}_l)\right).
\end{align}
A straightforward calculation reveals that the eigenvalues $\lambda_{l1}$ and $\lambda_{l2}$ of the matrix $L(\tilde{w}_l,\tilde{v}_l)$ have same real part: 
\begin{align} \label{real part of eigenvalue}
    \mathfrak{Re}(\lambda_{l1})=\mathfrak{Re}(\lambda_{l2})=-\dfrac{\lambda}{3}\left( \cos(\tilde{w}_l)\cosh(\tilde{v}_l)+2\cos(2\tilde{w}_l)\cosh(2\tilde{v}_l)\right).
\end{align}
We observe that by using the second equation of system equations \eqref{RHS n=3}, the real part of the eigenvalues may be written as
\begin{align} \label{eq:98}
   \mathfrak{Re}(\lambda_{l1})=\mathfrak{Re}(\lambda_{l2})
   =
   -
   \frac{\lambda}{3}\cos(\tilde{w}_l)\left( \cosh(\tilde{v}_l)-\dfrac{2\sinh(\tilde{v}_l)\cosh(2\tilde{v}_l)}{\sinh(2\tilde{v}_l)}\right).
\end{align}
Since for all $v>0$, 
\begin{align*} 
\cosh(v)-\dfrac{2\sinh(v)\cosh(2v)}{\sinh(2v)}
&=
\cosh(v) - \dfrac{\cosh(2v)}{\cosh(v)}
=
\dfrac{\cosh^2(v)-\cosh(2v)}{\cosh(v)}
\\
&=
\dfrac{\cosh^2(v)-(2\cosh^2(v)-1)}{\cosh(v)}
=
\dfrac{1-\cosh^2(v)}{\cosh(v)}
<0
,    
\end{align*}
it is straightforward to see that $\mathfrak{Re}(\lambda_{l1})=\mathfrak{Re}(\lambda_{l2})>0$ when $l=1$ and, on the other hand, $\mathfrak{Re}(\lambda_{l1})=\mathfrak{Re}(\lambda_{l2})<0$ when $l=3$. Because of Hartman–Grobman theorem, we verify our claim. To recapitulate, we proved that in $R_0$, one of these equilibria is a sink $(\tilde{w}_3,\tilde{v}_3):=(w^0,v^0)$ and the other is a source $(\tilde{w}_1,\tilde{v}_1):=(\bar{w}^0,\bar{v}^0)$. Finally, the periodicity let us conclude that, in $R_k$, there exists only one sink $(w^k,v^k)$ such that $w^k\in(2k\pi+r_3,2k\pi+5\pi/4)$ and only one source $(\bar{w}^k,\bar{v}^k)$ such that $\bar{w}^k\in(2k\pi+\pi/4,2k\pi+r_1)$.

\subsubsection{``Critically weak" and ``weak" cases $\Lambda_c \leqslant \lambda<\lambda_c=\omega$:} \label{4.2.2}

First, we focus on the ``critically weak" case $\lambda=\Lambda_c$. We claim that the system of equations \eqref{n=3} possesses two equilibria. Thus we may verify that one of the equilibrium points is $(w,v)=(r_1,0)$; see Fig.~\ref{fig:M5}. This equilibrium is semistable, since the stable manifold $W^{s}(r_1,0)$ contains at least $\{ (w,v): 0<w \leqslant r_1,~v=0\}$ and on the other hand, the unstable manifold $W^{u}(r_1,0)$ contains at least $\{ (w,v): r_1<w<2\pi,~v=0\}$, which follows from the real Kuramoto model. 

Following a similar argument from the super weak coupling and recalling the definition of $f$, $g_1$ and $g_2$ in \eqref{f function},\eqref{g1} and \eqref{g2}, we notice that any equilibrium point with $v>0$ cannot exist in either $w\in (\frac{3\pi}{4},r_2) \cup (r_4,\frac{7\pi}{4})$ or $\left(\frac{\pi}{4},r_1\right)$, since $f(w,g^{-1}_1 g_2(w))<0$ when $w\in \left(\frac{\pi}{4},r_1\right)$. 

This latter fact can be shown via a straightforward argument using calculus. First, we observe that $\frac{\mathrm{d}^2}{\mathrm{d}w^2} f(w,g^{-1}_1 g_2(w)) < 0$ when $w\in \left(\frac{\pi}{4},r_1\right)$ once the second derivative is calculated and simplified. This implies that the derivative $\frac{\mathrm{d}}{\mathrm{d}w} f(w,g^{-1}_1 g_2(w)) $ is strictly decreasing. Since it can be readily computed that $\frac{\mathrm{d}}{\mathrm{d}w} f(w,g^{-1}_1 g_2(w))\vert_{w=r_1}>0$, this implies that $\frac{\mathrm{d}}{\mathrm{d}w} f(w,g^{-1}_1 g_2(w))>0$ on  $\left(\frac{\pi}{4},r_1\right)$. Then, observing that $f(w,g^{-1}_1 g_2(w))\vert_{w=r_1}=f(r_1,0)=\frac{3\omega}{2\lambda}-(\sin(r_1)+\sin(2r_1))$ from \eqref{87} is not positive in the (critically) weak regime $\Lambda_c \leqslant \lambda<\lambda_c$, we conclude that $f(w,g^{-1}_1 g_2(w)) < 0$ when $w\in \left(\frac{\pi}{4},r_1\right)$. 

Using Lemma \ref{lemma 4.5} and applying \eqref{89} and \eqref{90}, we obtain that there is a second equilibrium point belonging in $\left(r_3,\frac{5\pi}{4}\right)$. Since this equilibrium point is in $\left(\pi,\frac{5\pi}{4}\right)$, on which cosine is negative, from \eqref{eq:98} we conclude that the equilibrium point is asymptotically stable (i.e., a sink). 

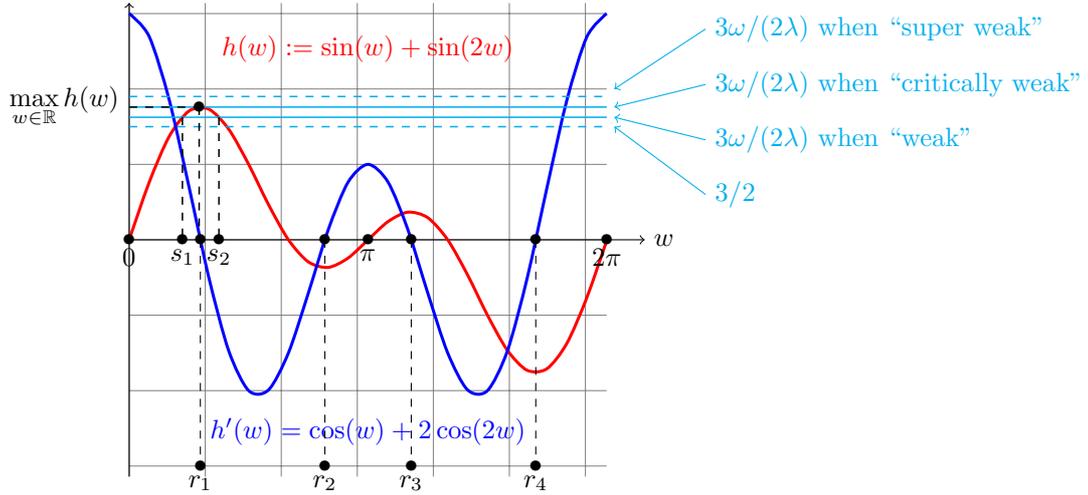
\begin{figure} \label{fig:M5}
        \centering 
        %% draw
        \begin{tikzpicture}
            %generates the grid
            \draw[step=1cm,gray,very thin] (0,-3.14) grid (6.28,3.14);
            % generates the axes
            \draw[->] (0,0) -- (6.78,0) node[anchor=west] {$w$};
            \draw[->] (0,-3.14) -- (0,3.14);
            %generates the functions
            \draw[red, line width = 0.4mm]  plot[smooth,domain=0:6.28] (\x,{sin(deg(\x))+sin(deg(2*\x))});
            \draw[red] (3.14,2.24) node[anchor=south] {$h(w):=\sin(w)+\sin(2w)$};
            \draw[blue, line width = 0.4mm]  plot[smooth,domain=-0:6.28] (\x,{cos(deg(\x))+2*cos(deg(2*\x))});
            \draw[blue] (3.14,-2.24) node[anchor=north] {$h'(w)=\cos(w)+2\cos(2w)$};
            \draw[cyan, line width = 0.2mm]  (0,1.625) -- (6.28,1.625) ;
            \draw[cyan,->] (7.58,1.325) node[anchor=west] {$3\omega/(2\lambda)$ when ``weak"} -- (6.38,1.625) ;
            \draw[cyan, line width = 0.2mm]  (0,1.76) -- (6.28,1.76) ;
            \draw[cyan,->] (7.58,2.06) node[anchor=west] {$3\omega/(2\lambda)$ when ``critically weak"} -- (6.38,1.76) ;
            \draw[dashed, cyan, line width = 0.2mm]  (0,1.9) -- (6.28,1.9) ;
            \draw[cyan,->] (7.58,2.795) node[anchor=west] {$3\omega/(2\lambda)$ when ``super weak"} -- (6.38,2) ;
            \draw[dashed, cyan, line width = 0.2mm] (0,1.5) -- (6.28,1.5) ;
            \draw[cyan,->] (7.58,0.6) node[anchor=west] {$3/2$} -- (6.38,1.5) ;
            \draw[dashed, black, line width = 0.2mm](0,1.76) node[anchor=east]{$\max\limits_{w\in\mathbb{R}} h(w)$} -- (0.92,1.76) node{$\bullet$} ;
            \draw[dashed, black, line width = 0.2mm] (0.92,1.76) -- (0.92,0) ;
            \draw[dashed, black, line width = 0.2mm] (0.70,1.625) -- (0.70,0) node{$\bullet$} node[anchor=north] {$s_1$} ;
            \draw[dashed, black, line width = 0.2mm] (1.18,1.625) -- (1.18,0) node{$\bullet$} node[anchor=north] {$s_2$} ;
            %generates the coordinates
            \draw[black] (0,1/16) -- (0,0) node{$\bullet$} node[anchor=north]{$0$};
            \draw[black] (3.14,1/16) --  (3.14,0) node{$\bullet$} node[anchor=north]{$\pi$};
            \draw[black] (6.28,1/16) --  (6.28,0) node{$\bullet$} node[anchor=north]{$2\pi$};
            %% label r1~r4
            \draw[black,dashed] (0.9359,0) node{$\bullet$} -- (0.9359,-3) node{$\bullet$} node[anchor=north]{$r_1$} ;
            \draw[black,dashed] (2.5737,0) node{$\bullet$} -- (2.5737,-3) node{$\bullet$} node[anchor=north]{$r_2$} ;
            \draw[black,dashed] (3.7094,0) node{$\bullet$} -- (3.7094,-3) node{$\bullet$} node[anchor=north]{$r_3$} ;
            \draw[black,dashed] (5.3473,0) node{$\bullet$} -- (5.3473,-3) node{$\bullet$} node[anchor=north]{$r_4$} ;
        \end{tikzpicture} 
        \caption{This plot is an illustration of the function $h(w)$ and its derivative $h'(w)$ on the interval $[0,2\pi]$. We also clarify the values of $3\omega/(2\lambda)$ for "super weak", ``critically weak" and ``weak" coupling in cyan: ``super weak" when $3\omega/(2\lambda) > \max\limits_{w\in\mathbb{R}} h(w)$, ``critically weak" when $3\omega/(2\lambda)=\max\limits_{w\in\mathbb{R}} h(w)$, and ``weak" when $3/2<3\omega/(2\lambda)<\max\limits_{w\in\mathbb{R}} h(w)$.} 
        \label{fig:M5} 
    \end{figure}

Finally, we restrict our attention to the ``weak" case. When $\Lambda_c < \lambda<\lambda_c$, the system equations \eqref{n=3} possesses three equilibria. Based on Fig.~\ref{fig:M5}, we observe that two of these equilibria are $(w,v)=(s_1,0)$ and $(w,v)=(s_2,0)$. These two equilibrium points bifurcate from $(r_1,0)$ when $\lambda$ exceeds the critically weak coupling strength $\Lambda_c$. As $h'(s_1)>0$ and $h'(s_2)<0$, we can conclude that $(s_1,0)$ is a sink and $(s_2,0)$ is a source by the formula stated in \eqref{real part of eigenvalue}. Finally, we find that the third equilibrium point is a sink, whose $w$-coordinate belongs to $\left[r_3,\frac{5\pi}{4}\right)$ and $v$-coordinate is strictly positive from \eqref{89}, \eqref{90} and the discussion immediately below the two equations.

\newpage

\begin{figure}[H] 
\centering
\includegraphics[width=0.5\textwidth]{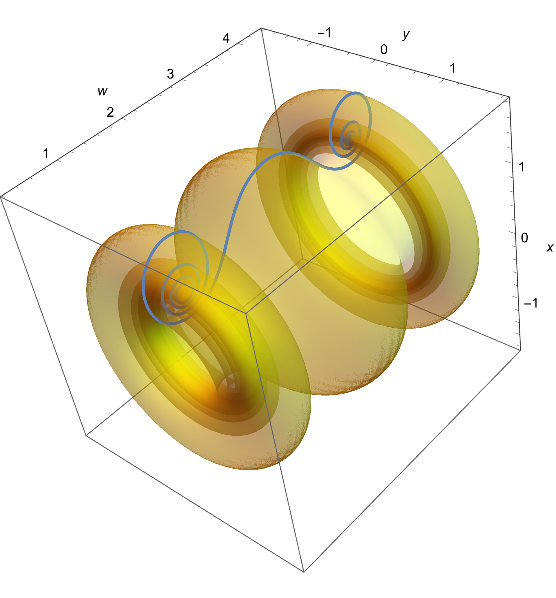}
\caption{This is a 3D plot of the phase-locking synchronization solutions for ``super weak" coupling in $N=3$ with parameters $\omega = 1$ and $\lambda = \Lambda_c/2 \approx 0.426095$. The blue curve is the solution of the system of equations \eqref{n=3} with initial conditions: $w(0)=1$, $x(0)=0.8$, $y(0)=0$, $z(0)=0$. The yellow surface is the collection of flows for the system of equations \eqref{n=3}: $\{ \phi^t(w(0),x(0),y(0),z(0)): w(0)=1, |x(0)|^2+|y(0)|^2=0.64, z(0)=0\}$.}
% \label{peachring}
\end{figure}

\begin{figure}[H] 
\centering
\includegraphics[width=0.5\textwidth]{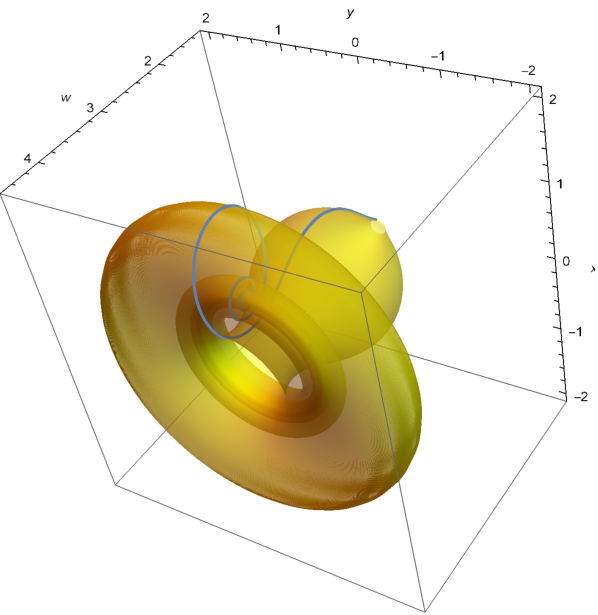}
\caption{This is a 3D plot of the phase-locking synchronization solutions for ``weak" coupling in $N=3$ with parameters $\omega = 1$ and $\lambda = (1 + \Lambda_c)/2 \approx 0.963047$. The blue curve is the solution of the system of equations \eqref{n=3} with initial conditions: $w(0)=1.4$, $x(0)=0.1$, $y(0)=0$, $z(0)=0$. The yellow surface is the collection of flows for the system of equations \eqref{n=3}: $\{ \phi^t(w(0),x(0),y(0),z(0)): w(0)=1.4, |x(0)|^2+|y(0)|^2=0.01, z(0)=0\}$.}
% \label{peachring}
\end{figure}

\newpage

\end{subsection}

\section*{Conclusion} \label{sec 5}
We have defined the Kuramoto model with quaternions and identified the features when the system is under strong and weak coupling strength, $\lambda$. Firstly, a Kuramoto system of $N$ oscillators under strong coupling will be asymptotically stable and will achieve phase-locking and frequency synchronization. Secondly, for the case when coupling strength is weak, we demonstrate that periodic orbits exist for the $N=2$ oscillator system, and its phase-locking state is Lyapunov stable. For the $N=3$ oscillator system, we consider the ``Lion Dance" flow and show that one of its phase-locking states is asymptotically stable and the other is unstable when the coupling strength is super weak. If the coupling strength exceeds critical point $\lambda > \Lambda_c$, then there is an unstable phase-locking state that bifurcates into stable and unstable phase-locking states. 

We also numerically investigated the behavior of the Lion Dance flow when $N\geqslant 4$. Applying the assumptions that $\omega_i-\omega_{i+1}=\omega_{i+1}-\omega_{i+2}>0$ and $q_i(0)-q_{i+1}(0)=q_{i+1}(0)-q_{i+2}(0)$ for $i=1,\ldots,N-2$ into the quaternion Kuramoto model \eqref{main eq} with \eqref{def of v_mn}, we obtain the following simplified equations
\begin{equation} \label{lion}
\left\{ \begin{aligned}
    \dot{w}=& \;
    \dfrac{\omega}{N-1}
    -
    \dfrac{\lambda}{N}\sum_{m=1}^{N-1} \sin(mw)\cosh(mv),\\
    \dot{v}=&~~~~~~~~-\dfrac{\lambda}{N}\sum_{m=1}^{N-1} \cos(mw)\sinh(mv),
\end{aligned}
\right.
\end{equation}
where we denoted $\omega:=\omega_1-\omega_N=\lambda_c$, and the functions $w(t):=w_i(t)-w_{i+1}(t)$, $v(t):=\sqrt{(x_i(t)-x_{i+1}(t))^2+(y_i(t)-y_{i+1}(t))^2+(z_i(t)-z_{i+1}(t))^2}$ for any $i=1,\ldots,N-1$ are well-defined due to uniqueness of solution to \eqref{main eq}.

Based on the numerical computation in Fig.~\ref{fig:100}, we conjecture that when $N \geqslant 4$ the number of asymptotically stable phase-locking states of Lion Dance flow is $\lfloor \frac{N-1}{2}\rfloor$ when the coupling strength is ``super weak" or ``critically weak", i.e., $\lambda \leqslant \Lambda_c$. 

There remain many aspects of the quaternionic Kuramoto model to be explored, such as the stability of Lion Dance flow when $N\geqslant 4$, the general topological structure for both strong and weak coupling strengths, the effect of time delay and the behavior of the system as the number of the oscillators tends to infinity using the mean field approximation.

\begin{figure}[h]
\centering
\subfigure[Lion Dance flow, $N=3$.]{\includegraphics[width=0.46\textwidth]{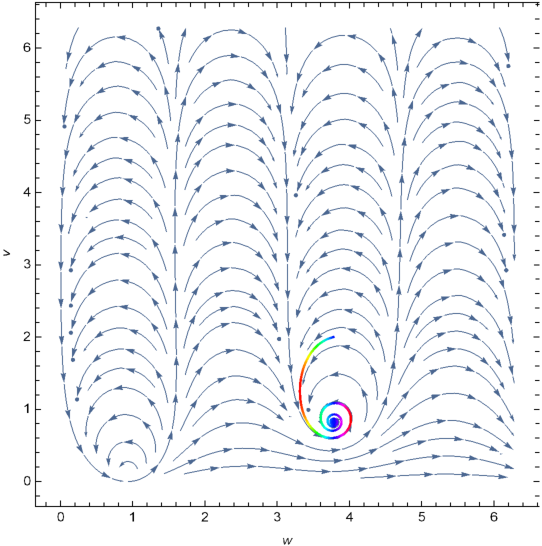}}
\subfigure[Lion Dance flow, $N=4$.]{\includegraphics[width=0.46\textwidth]{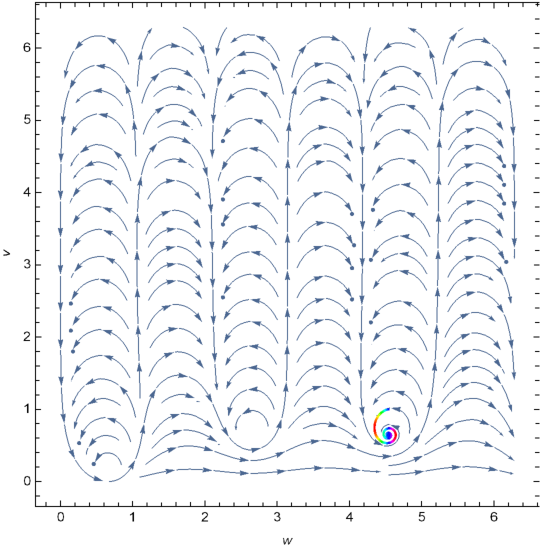}}
\subfigure[Lion Dance flow, $N=5$.]{\includegraphics[width=0.46\textwidth]{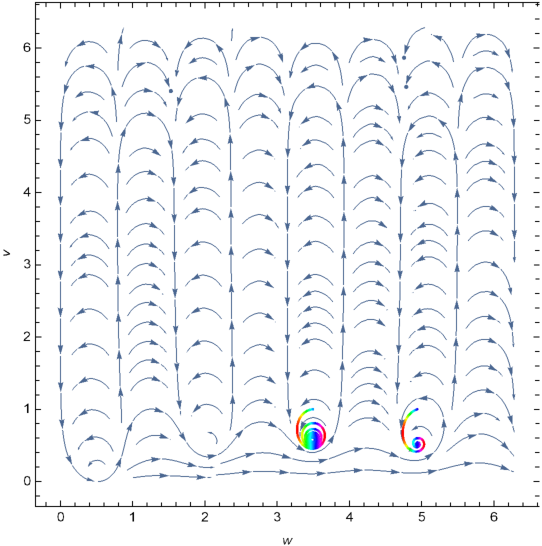}}
\subfigure[Lion Dance flow, $N=6$.]{\includegraphics[width=0.46\textwidth]{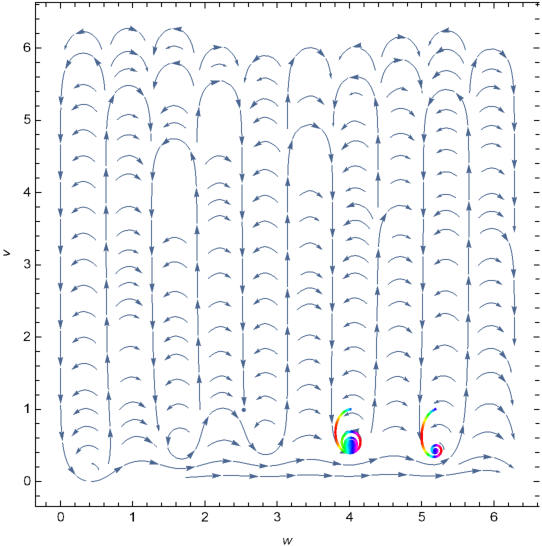}}
\caption{The vector fields with some sample trajectories of Lion Dance flow for $N=3$ (a), $N=4$ (b), $N=5$ (c), $N=6$ (d) when the coupling strength is ``critically weak." We verified in this paper that for $N=3$, the number of asymptotically stable phase-locking states of Lion Dance flow is $1$ when the coupling strength is ``super weak" or ``critically weak." Moreover, we conjecture that the number of asymptotically stable states is exactly $\lfloor \frac{N-1}{2}\rfloor$ for $N=4,5,...$ in the same coupling strength regime.}
\label{fig:100}
\end{figure}

\section{Acknowledgement}
We greatly acknowledge discussions with Zih-Hao Huang and Yen-Ting Lin.

\section{Data Availability Statement}
Data will be made available on reasonable request.

\section{Declaration}
We do not have any conflict of interest.
\clearpage

\bibliography{sn-bibliography}
\end{document}